\newtheorem{theorem}{Theorem}
\newtheorem{proposition}[theorem]{Proposition}
\newtheorem{corollary}[theorem]{Corollary}
\newtheorem{conjecture}[theorem]{Conjecture}
\newtheorem{lemma}[theorem]{Lemma}
\begin{document}
\title{Irredundant Families of Subcubes}
\author{David Ellis}
\date{January 2010}
\maketitle

\begin{abstract}
We consider the problem of finding the maximum possible size of a family of \(k\)-dimensional subcubes of the \(n\)-cube \(\{0,1\}^{n}\), none of which is contained in the union of the others. (We call such a family `{\em irredundant}'). Aharoni and Holzman \cite{aharoni} conjectured that for \(k > n/2\), the answer is \({n \choose k}\) (which is attained by the family of all \(k\)-subcubes containing a fixed point). We give a new proof of a general upper bound of Meshulam \cite{meshulam}, and we prove that for \(k \geq n/2\), any irredundant family in which all the subcubes go through either \((0,0,\ldots,0)\) or \((1,1,\ldots,1)\) has size at most \({n \choose k}\). We then give a general lower bound, showing that Meshulam's upper bound is always tight up to a factor of at most \(e\).
\end{abstract}

\section{Introduction}
Let \(\{0,1\}^{n}\) denote the \(n\)-dimensional discrete cube, the set of all 0-1 vectors of length \(n\). A \(k\)-{\em dimensional subcube} (or \(k\)-{\em subcube}) of \(\{0,1\}^{n}\) is a subset of \(\{0,1\}^{n}\) of the form
\[\{x \in \{0,1\}^{n}:\ x_{i} = a_{i}\ \forall i \in T\}\]
where \(T\) is a set of \(n-k\) coordinates, called the \emph{fixed coordinates}, and the \(a_{i}\)'s are fixed elements of \(\{0,1\}\). The other coordinates \(S = [n]\setminus T\) are called the \emph{moving coordinates}. We will represent a subcube by an \(n\)-tuple of 0's, 1's and \(*\)'s, where the \(*\)'s denote moving coordinates and the 0's and 1's denote fixed coordinates. For example, \((*,*,*,0,1)\) denotes a 3-dimensional subcube of \(\{0,1\}^{5}\).

We consider the problem of finding the maximum possible size of a family of \(k\)-subcubes of the \(n\)-cube \(\{0,1\}^{n}\), none of which is contained in the union of the others. In other words, each has a vertex not contained in any of the others (which we call a `private' vertex). We will call such a family `{\em irredundant}', and we write \(M(n,k)\) for the maximum size of an irredundant family of \(k\)-subcubes of \(\{0,1\}^{n}\).

Let \([n]\) denote the set \(\{1,2,\ldots,n\}\). We may identify \(\{0,1\}^{n}\) with \(\mathbb{P}[n]\), the set all subsets of \([n]\), by identifying a subset \(x \subset [n]\) with its characteristic vector \(\chi_{x}\), defined by
\[\chi_{x}(i) = 1\ \forall i \in x,\ \chi_{x}(i) = 0\ \forall i \notin x.\]
We write \((0,0,\ldots,0)=\boldsymbol{0}\) and \((1,1,\ldots,1) = \boldsymbol{1}\). We will refer to \(|x \Delta y|\), the number of coordinates in which \(x\) and \(y\) differ, as the \emph{Hamming distance} between \(x\) and \(y\), and the set
\[\{y \in \{0,1\}^{n}: |x \Delta y| \leq r\}\]
as the \emph{Hamming ball of centre} \(x\) \emph{and radius} \(r\).

Here are some natural examples of irredundant families:\\
\\
The family of all translates of a fixed \(k\)-subcube,
\[\{A+x: x \in \{0,1\}^{n}\}\]
where \(A\) is a \(k\)-subcube of \(\{0,1\}^{n}\) --- in other words, the collection of all the subcubes having the same moving coordinates as \(A\). This family partitions \(\{0,1\}^{n}\), so every vertex is a private vertex of its subcube, and it is a maximal irredundant family; it has size \(2^{n-k}\).\\
\\
The family \(\mathcal{F}_{\boldsymbol{0}}\) of all \(k\)-subcubes containing \(\boldsymbol{0}\), \(\{\mathbb{P}x : x \in [n]^{(k)}\}\). Clearly, \(x\) is a private vertex of the \(k\)-subcube \(\mathbb{P}x\); it is the unique such, since any \(y \subsetneq x\) can be extended to a different \(k\)-set \(z \neq x\). This family has size \(n \choose k\). For \(k \geq \tfrac{1}{2}n\) it is maximal, since then any \(k\)-subcube contains a \(k\)-set. Similarly, for any \(v \in Q_{n}\) we let \(\mathcal{F}_{v}\) be the collection of all \(k\)-subcubes through \(v\); we call these the `principal' irredundant families. Aharoni and Holzman \cite{aharoni} conjectured that for \(k > n/2\), there are no larger irredundant families: 
\begin{conjecture}[Aharoni-Holzman, 1991]
\label{conjecture:aharoni}
If \(k > n/2\), any irredundant family of \(k\)-subcubes of \(\{0,1\}^{n}\) has size at most \({n \choose k}\).
\end{conjecture}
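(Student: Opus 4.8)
The plan is to set the problem up linear-algebraically and then try to force the dimension count down to \(\binom{n}{k}\). To a \(k\)-subcube \(C\) with fixed-coordinate set \(T\) (so \(|T|=n-k\)) and fixed values \((a_i)_{i\in T}\), I attach its indicator function \(g_C\colon\{0,1\}^n\to\{0,1\}\), which as a multilinear polynomial equals \(\prod_{i\in T}\bigl(1-(x_i-a_i)^2\bigr)\) and so has degree exactly \(n-k\). Every \(g_C\) therefore lies in the space \(V\) of multilinear polynomials of degree at most \(n-k\), with \(\dim V=\sum_{j=0}^{n-k}\binom{n}{j}\). If \(C_1,\dots,C_m\) is irredundant with private vertices \(v_1,\dots,v_m\), then \(v_i\in C_i\) but \(v_i\notin C_j\) for \(j\neq i\), so \(g_{C_j}(v_i)=\delta_{ij}\); evaluating a dependence \(\sum_j c_j g_{C_j}\equiv 0\) at \(v_i\) gives \(c_i=0\). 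Hence the \(g_{C_i}\) are independent and \(m\le\sum_{j=0}^{n-k}\binom{n}{j}\), which reproves a Meshulam-type upper bound.

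For \(k>n/2\) we have \(n-k<n/2\), so the top summand \(\binom{n}{n-k}=\binom{n}{k}\) is the largest, and the conjecture asserts that every lower-order term can be discarded. The obvious refinement is to pass to leading terms: the degree-\((n-k)\) part of \(g_C\) is (up to sign) the squarefree monomial \(x_T=\prod_{i\in T}x_i\), and these span a space of dimension exactly \(\binom{n}{k}\). Were the leading monomials of \(g_{C_1},\dots,g_{C_m}\) independent we would be done, but they are not: subcubes with the same fixed set \(T\) but different values \(a\) have proportional leading terms (the family of all translates of one subcube is the extreme case). Any successful argument must therefore use the lower-order terms, i.e. the actual fixed values and the combinatorics of the private vertices, to break these ties; this is precisely the crux, and the reason the conjecture is still open.

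As a first, attainable target I would prove the statement under the extra hypothesis that every subcube passes through \(\boldsymbol{0}\) or through \(\boldsymbol{1}\), where the structure is rigid. A subcube through \(\boldsymbol{0}\) is determined by its set \(S\) of \(k\) moving coordinates, its vertices being exactly the subsets of \(S\); a subcube through \(\boldsymbol{1}\) is determined by the set \(U=[n]\setminus S\) of its \(n-k\) fixed coordinates, its vertices being exactly the supersets of \(U\). Sending each \(\boldsymbol{0}\)-subcube to its moving set \(S\in[n]^{(k)}\) and each \(\boldsymbol{1}\)-subcube to the complement \([n]\setminus U\in[n]^{(k)}\) gives two injections into \([n]^{(k)}\), so it suffices to embed the ``doubly used'' \(k\)-sets (those arising from a down-subcube and the complementary up-subcube at once) injectively into the unused ones; equivalently \(|\mathcal{S}|+|\mathcal{W}|\le\binom{n}{k}\), where \(\mathcal{S},\mathcal{W}\subseteq[n]^{(k)}\) index the two types. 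I expect this to reduce to a defect-Hall/transversal condition, with the private-vertex constraints (a private subset of \(S\) must avoid every other down-cube and contain no up-cube's fixed set, and dually) supplying the neighbourhood bounds; indeed one checks that taking all down- and up-subcubes simultaneously destroys irredundancy, so the constraint is genuine.

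The main obstacle for the full conjecture is removing the \(\boldsymbol{0}\)-or-\(\boldsymbol{1}\) hypothesis. The natural route is a compression that shifts the fixed values coordinate by coordinate towards a common vertex without decreasing the size of the family or destroying irredundancy, thereby reducing an arbitrary family to the rigid case above. Producing a compression that provably preserves the existence of private vertices is the step I expect to resist in general, and it is where any attack on the full conjecture must concentrate.
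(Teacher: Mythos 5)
The statement you are asked to prove is Conjecture~\ref{conjecture:aharoni}, which the paper itself leaves open; there is no proof of it in the paper, and your proposal --- quite candidly --- does not supply one either. What you have written is a correct reconstruction of the two partial results the paper does contain, together with a research plan whose two decisive steps are left unexecuted. Concretely: your multilinear-polynomial argument is sound and reproves the bound \(M(n,k)\le\sum_{i=k}^{n}\binom{n}{i}\), but this is the Aharoni--Holzman bound of Proposition~\ref{proposition:aharoni}, not Meshulam's stronger bound (\ref{eq:bound1}), and as you note it exceeds \(\binom{n}{k}\) by the full lower-order tail. Your diagnosis of why the leading-term refinement fails (all translates of a fixed subcube share a leading monomial) is accurate, and this is exactly why the linear-algebra route stalls.

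The two remaining steps are where the genuine gaps lie. First, for the special case of subcubes through \(\boldsymbol{0}\) or \(\boldsymbol{1}\), you predict a defect-Hall/transversal argument but do not verify any Hall-type condition, and it is far from clear one holds in a checkable form. The paper's Theorem~\ref{thm:topbottom} proves this case by a quite different mechanism: after reducing to \(n=2k\) and writing \(|\mathcal{A}|=\binom{2k}{k}+|S|-|R|\), it pushes the private vertices of the doubly-used subcubes to levels \(k-1\) and \(k+1\), forms their neighbourhoods \(B_i',C_i'\) inside \(R\), establishes the parity pattern \(|B_i'\cap C_i'|=1\) and \(|B_i'\cap C_j'|\in\{0,2\}\) for \(i\neq j\), and invokes a mod-\(2\) linear-independence lemma to get \(|S|\le|R|\). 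An injection \(S\to R\) is what this implies, but the proof does not construct one combinatorially, and the paper stresses that it knows no purely combinatorial substitute; so your ``I expect this to reduce to Hall'' is an unproved and probably hard claim, not a routine verification. Second, your proposed compression to reduce an arbitrary irredundant family to the \(\boldsymbol{0}\)-or-\(\boldsymbol{1}\) case is not accompanied by any candidate compression operator, and standard down-shifts do not obviously preserve the existence of private vertices (two distinct subcubes can compress to the same subcube, or a private vertex can be absorbed). Since the paper explicitly states that Meshulam's bound is tight at \(n=2k+1\) and that new ideas are needed near \(k=n/2\), both of your missing steps are precisely the open content of the problem, not details to be filled in.
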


Aharoni and Holzman (unpublished -- see \cite{meshulam}) gave the following general upper bound on the maximum size of an irredundant family of \(k\)-subcubes of \(\{0,1\}^{n}\):
\begin{equation}
\label{eq:linearalgebrabound}
M(n,k) \leq \sum_{i=k}^{n}{n \choose i}\quad \forall k \leq n.
\end{equation}

This may be proved using a short linear independence argument. Mesulam \cite{meshulam} proved the following stronger upper bound using a purely combinatorial argument: 
\begin{equation}
\label{eq:bound1}
M(n,k) \leq \frac{2^n}{\sum_{i=0}^{k}{n \choose i}}{n \choose k} \quad \forall k \leq n.
\end{equation}

(Intuitively, this is saying that, if there were a partition of \(\{0,1\}^{n}\) into Hamming balls of radius \(k\), it would be best to take the irredundant family of all \(k\)-subcubes containing one of the centres of the balls.) We will give a simple proof of Meshulam's bound using Bollob\'as' Inequality. A variant of this proof shows that if we choose one private vertex for each subcube in an irredundant family, then any Hamming ball of radius \(k\) contains at most \({n \choose k}\) of these private vertices. (This immediately implies Meshulam's bound by averaging over all Hamming balls of radius \(k\).)

For \(k/n > \gamma\), where \(\gamma \in (\tfrac{1}{2},1)\) is fixed, Meshulam's bound gives \(M(n,k) \leq (1+o(1)) {n \choose k}\), i.e. it asymptotically approaches the conjectured bound; if \(\gamma \geq \gamma_{0} \approx 0.8900\), it gives \(M(n,k) < {n \choose k}+1\) for \(n\) sufficiently large, proving Conjecture \ref{conjecture:aharoni} in this case.

We observe that equality holds in Meshulam's bound when there is a partition of \(\{0,1\}^{n}\) into Hamming balls of radius \(k\), i.e. in the following cases:
\begin{itemize}
\item \(k=1\), \(n+1\) is a power of 2
\item \(k = 3\), \(n=23\)
\item \(n = 2k+1\)
\end{itemize}

When \(n=2k+1\), the irredundant family of all \(k\)-subcubes containing either \(\boldsymbol{0}\) or \(\boldsymbol{1}\) has size \(2{n \choose k}\).

We are then led to investigate the special case when every subcube must go through either \(\boldsymbol{0}\) or \(\boldsymbol{1}\); we prove by an unusual linear algebra argument that for \(k \geq n/2\), any irredundant family in which all \(k\)-subcubes go through either \(\boldsymbol{0}\) or \(\boldsymbol{1}\) has size at most \({n \choose k}\).

Finally, we obtain a general lower bound for all \(n\) and \(k\). A probabilistic argument shows that there exists an irredundant family of \(k\)-subcubes of \(\{0,1\}^{n}\) of size at least
\begin{equation}
\label{eq:randomlowerbound}
\beta(1-\beta)^{(1-\beta)/\beta}2^{n},
\end{equation}
where 
\[\beta := \frac{{n \choose k}}{\sum_{i=0}^{k}{n \choose i}}.\]
Combining this with Meshulam's bound, we see that
\[\beta(1-\beta)^{(1-\beta)/\beta} 2^{n} \leq M(n,k) \leq \beta 2^{n}.\]
The ratio between the upper and lower bound above is at most \(e\) for all \(n\) and \(k\).

If \(k = \lfloor \gamma n \rfloor\) for fixed \(\gamma \in (0,\tfrac{1}{2})\), then
\[\beta = \left(\frac{1-2\gamma}{1-\gamma}\right)(1+o(1)),\]
so we obtain
\[(1+o(1))\left(\frac{\gamma}{1-\gamma}\right)^{\frac{\gamma}{1-2\gamma}}\left(\frac{1-2\gamma}{1-\gamma}\right)2^{n} \leq M(n,\lfloor \gamma n \rfloor) \leq (1+o(1))\left(\frac{1-2\gamma}{1-\gamma}\right)2^{n},\]
showing that \(M(n,\lfloor \gamma n \rfloor)\) has order of magnitude \(2^{n}\).

If \(k = o(n)\), we obtain \(M(n,k) = (1-o(1))2^{n}\).

\section{Upper bounds}

Aharoni and Holzman proved the following:

\begin{proposition}[Aharoni-Holzman, 1991]
\label{proposition:aharoni}
For any \(k \leq n\), any irredundant family of \(k\)-subcubes of \(\{0,1\}^{n}\) has size at most
\[\sum_{i=k}^{n}{n \choose i}\]
\end{proposition}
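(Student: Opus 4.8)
The plan is to use the polynomial (linear-algebra) method, exploiting the private vertices to produce a clean diagonal evaluation pattern. Let \(\mathcal{F} = \{C_1,\ldots,C_m\}\) be an irredundant family of \(k\)-subcubes, and for each \(j\) fix a private vertex \(v_j \in C_j\), so that \(v_j \in C_i\) if and only if \(i = j\). I would work in the space of real-valued functions on \(\{0,1\}^n\), each of which is represented uniquely as a multilinear polynomial in \(x_1,\ldots,x_n\).

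The first step is to pin down the degree of the indicator function of a \(k\)-subcube. If \(C\) has fixed coordinate set \(T\) (with \(|T| = n-k\)) and fixed values \(a_i\), then
\[
\mathbf{1}_C(x) = \prod_{i \in T}\bigl(a_i x_i + (1-a_i)(1-x_i)\bigr),
\]
a product of \(n-k\) affine-linear factors in distinct variables, hence a multilinear polynomial of degree exactly \(n-k\). Thus all the \(\mathbf{1}_{C_j}\) lie in the subspace \(V\) of multilinear polynomials of degree at most \(n-k\), which has dimension \(\sum_{i=0}^{n-k}\binom{n}{i}\).

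The second step is to show that \(\mathbf{1}_{C_1},\ldots,\mathbf{1}_{C_m}\) are linearly independent, and this is exactly where the private vertices do the work. Evaluating the indicators at the \(v_j\) gives \(\mathbf{1}_{C_i}(v_j) = \delta_{ij}\), so the \(m \times m\) evaluation matrix is the identity. Hence from \(\sum_j \lambda_j \mathbf{1}_{C_j} = 0\), substituting \(x = v_i\) yields \(\lambda_i = 0\) for every \(i\), and the functions are independent.

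Combining the two steps, \(m \leq \dim V = \sum_{i=0}^{n-k}\binom{n}{i}\), which equals \(\sum_{i=k}^{n}\binom{n}{i}\) after the substitution \(i \mapsto n-i\), giving the claimed bound. The argument is genuinely short, and I do not anticipate a serious obstacle: the only point requiring care is the correct identification of the ambient space. Recognising that the relevant dimension is that of \emph{bounded-degree} multilinear polynomials — rather than all of \(\mathbb{R}^{\{0,1\}^n}\) — is precisely what makes the target \(\sum_{i=k}^{n}\binom{n}{i}\) drop out, so beyond getting the degree count right the proof is immediate.
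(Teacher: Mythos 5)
Your proposal is correct and follows essentially the same argument as the paper: the same linear-independence-via-private-vertices step (the evaluation matrix $\mathbf{1}_{C_i}(v_j)=\delta_{ij}$ is exactly the paper's evaluation of $\sum_C a_C\chi_C$ at $w_D$), and the same dimension count of multilinear polynomials of degree at most $n-k$. The only cosmetic difference is that you write the indicator uniformly as $\prod_{i\in T}(a_ix_i+(1-a_i)(1-x_i))$ where the paper splits the product over fixed 0's and fixed 1's.
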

\begin{proof}
Let \(C\) be a \(k\)-subcube of \(\{0,1\}^{n}\); we write \(0(C)\) for its set of fixed 0's and \(1(C)\) for its set of fixed 1's. The characteristic function \(\chi_{C}\) of \(C\) can be written as a function of \((x_{1},\ldots,x_{n}) \in \mathbb{R}^{n}\) as follows:
\begin{equation}\label{eq:chi}
\chi_{C}(x_{1},\ldots,x_{n}) = \prod_{i \in 0(C)}(1-x_{i})\prod_{i \in 1(C)}x_{i}
\end{equation}
---for example,
\[\chi_{(1,*,*,*,0)}(x_{1},x_{2},x_{3},x_{4},x_{5}) = x_{1}(1-x_{5}).\]

Now let \(\mathcal{A}\) be an irredundant family of \(k\)-subcubes of \(\{0,1\}^{n}\). Then
\[\{\chi_{C} : C \in \mathcal{A}\}\]
is a linearly independent subset of the vector space \(\mathbb{R}[x_{1},\ldots,x_{n}]\). To see this, for each \(C \in \mathcal{A}\), choose a private vertex \(w_{C} \in C\). Suppose
\[\sum_{C \in \mathcal{A}} a_{C}\chi_{C} = 0\]
for some real numbers \(\{a_{C}:\ C \in \mathcal{A}\}\). Then for any \(D \in \mathcal{A}\), evaluating the above on \(w_{D}\) gives:
\[0 = \sum_{C \in \mathcal{A}} a_{C}\chi_{C}(w_{D}) = a_{D}.\]
It is easy to check that the set of monomials
\[S = \{\prod_{i \in A}x_{i}: A \in [n]^{(\leq n-k)}\}\]
is a basis for the vector subspace
\[W = \langle \chi_{C}: C \textrm{ is a } k \textrm{-subcube of } \{0,1\}^{n} \rangle \subset \mathbb{R}[x_{1},\ldots,x_{n}].\]
Hence
\[|\mathcal{A}| \leq \dim(W) = |S| = \sum_{l=0}^{n-k}{n \choose l} = \sum_{i=k}^{n}{n \choose i},\]
proving the proposition.
\end{proof}

For \(k = \lfloor \gamma n \rfloor\), where \(\gamma \in (\tfrac{1}{2},1)\), we have:
\[\sum_{i=k}^{n}{n \choose i} = \sum_{l=0}^{n-k} {n \choose l} \leq \frac{3\gamma-1}{2\gamma-1}{n \choose \lfloor \gamma n \rfloor},\]
so Proposition \ref{proposition:aharoni} gives the correct order of magnitude.

For \(n=2k-1\), however, it only gives \(M(2k-1,k) \leq 2^{2k-2}\), compared with \(2(1-o(1)){2k-1 \choose k}\) from Meshulam's bound.

We now give a proof of Meshulam's bound which we believe to be slightly more intuitive than the proof in \cite{meshulam}. The idea is that for any irredundant family \(\mathcal{A}\) and any choice of private vertices, for every \(x \in \{0,1\}^{n}\), the private vertices chosen for the subcubes containing \(x\) cannot be too closely packed around \(x\). Our main tool is Bollob\'as' Inequality:

\begin{theorem}[Bollob\'as, 1965]
\label{thm:boll}
Let \(a_{1},\ldots,a_{N}\) and \(b_{1},\ldots,b_{N}\) be subsets of \(\{1,2,\ldots,n\}\) such that \(a_{i} \cap b_{j} = \emptyset\) if and only if \(i=j\). Then
\[\sum_{i=1}^{N} {|a_{i}|+|b_{i}| \choose |b_{i}|}^{-1} \leq 1\].
Equality holds only if there exists a subset \(Y \subset [n]\) and an integer \(a \in \mathbb{N}\) such that \(\{a_{1},\ldots,a_{N}\} = Y^{(a)}\), and \(b_{i} = Y \setminus a_{i}\ \forall i\).
\end{theorem}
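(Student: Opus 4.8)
The plan is to prove the inequality by the random-permutation (probabilistic) method, and then to read off the equality case from an analysis of when that argument is tight. First I would pick a uniformly random linear order $\pi$ of the ground set $[n]$ — equivalently of $Z := \bigcup_i (a_i \cup b_i)$, since elements outside $Z$ play no role in what follows. For each index $i$ let $E_i$ be the event that every element of $a_i$ precedes every element of $b_i$ under $\pi$. Because $a_i \cap b_i = \emptyset$, the set $a_i \cup b_i$ has exactly $|a_i| + |b_i|$ distinct elements; conditioning on the positions they occupy, their relative order is uniform among all $(|a_i|+|b_i|)!$ orderings, of which $|a_i|!\,|b_i|!$ are favourable to $E_i$, so
\[\Pr[E_i] = \frac{|a_i|!\,|b_i|!}{(|a_i|+|b_i|)!} = \binom{|a_i|+|b_i|}{|b_i|}^{-1}.\]

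The crux is that the events $E_i$ are pairwise disjoint. Suppose $E_i$ and $E_j$ both occurred for some $i \neq j$. Applying the hypothesis to the ordered pairs $(i,j)$ and $(j,i)$ gives $a_i \cap b_j \neq \emptyset$ and $a_j \cap b_i \neq \emptyset$, so I may choose $x \in a_i \cap b_j$ and $y \in a_j \cap b_i$, with $x \neq y$ since $a_i \cap b_i = \emptyset$. Then $E_i$ forces $\pi(x) < \pi(y)$ (as $x \in a_i$, $y \in b_i$), while $E_j$ forces $\pi(y) < \pi(x)$ (as $y \in a_j$, $x \in b_j$), a contradiction. Hence the $E_i$ are mutually exclusive, and
\[1 \ \ge\ \Pr\Big[\bigcup_i E_i\Big] = \sum_{i=1}^{N} \Pr[E_i] = \sum_{i=1}^{N}\binom{|a_i|+|b_i|}{|b_i|}^{-1},\]
which is exactly the asserted inequality.

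For the equality statement I would use that the bound is tight precisely when the disjoint events $E_i$ cover almost every linear order of $Z$, i.e. every ordering lies in \emph{exactly} one $E_i$. I would first argue that all the unions coincide: if some $z \in Z$ lay outside $a_i \cup b_i$ for a particular $i$ (or outside every pair), then sliding $z$ to the first or last position of $\pi$ would leave an ordering either uncovered or doubly covered, contradicting the exact cover; this forces $a_i \cup b_i = Y := Z$ for all $i$. With a common $Y$ and $a_i \cap b_i = \emptyset$, any slack $b_i \subsetneq Y \setminus a_i$ would enlarge $E_i$ (letting elements of $Y \setminus (a_i \cup b_i)$ fall between $a_i$ and $b_i$) and create overlaps, so $b_i = Y \setminus a_i$. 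Then $E_i$ is exactly the event that the first $|a_i|$ elements of $Y$ form the set $a_i$, and requiring that every ordering of $Y$ admit a unique such $i$ forces a common size $|a_i| = a$ and the family to range over all $a$-subsets, i.e. $\{a_1,\dots,a_N\} = Y^{(a)}$. I expect this equality analysis to be the main obstacle: the inequality itself is a clean disjointness argument, whereas pinning down the extremal systems requires the ``sliding'' and ``no-slack'' steps above to be made fully rigorous, including ruling out degenerate cases where some $a_i$ or $b_i$ is empty.
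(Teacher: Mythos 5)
The paper does not actually prove this theorem: it states Bollob\'as' Inequality and refers the reader to \cite{bollobas}, so there is no internal proof to compare against. Your proof of the inequality itself is the standard random-permutation argument and is correct: the computation of \(\Pr[E_i]\) is right, and the disjointness of the events \(E_i\) follows cleanly from picking \(x \in a_i \cap b_j\) and \(y \in a_j \cap b_i\) (both nonempty for \(i \neq j\) by hypothesis) and deriving \(\pi(x)<\pi(y)<\pi(x)\). This is essentially the same argument the paper itself deploys, recast in terms of random maximal chains through the sub-subcubes \(D_i\), when it applies the inequality inside the proof of Meshulam's bound. If the statement consisted only of the inequality, your proposal would be complete.

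The equality clause, however, is part of the stated theorem, and your analysis of it has genuine gaps. First, the ``sliding'' step does not produce a contradiction as described: if \(z \in Z \setminus (a_i \cup b_i)\) and \(\pi \in E_i\), then moving \(z\) to the front or back of \(\pi\) yields another ordering that is still in \(E_i\), hence still covered exactly once --- you get neither an uncovered nor a doubly covered order, so the conclusion \(a_i \cup b_i = Z\) for all \(i\) is not established. Second, the ``no-slack'' step cannot work by ``creating overlaps'': your own first argument shows the \(E_i\) are disjoint \emph{unconditionally}, for any system satisfying the hypothesis, so no choice of the \(b_i\) can create an overlap; any contradiction must instead come from a failure of \emph{coverage}, which you do not exhibit. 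Third, the final step (that all \(|a_i|\) share a common value \(a\) and that every \(a\)-subset of \(Y\) occurs) is asserted rather than derived. A workable route, once one knows the \(E_i\) partition the orderings, is to compare the witnesses of two orderings of \(Y\) differing by an adjacent transposition: a swap away from the ``cut'' preserves the witness, while a swap across the cut changes it to one of the same cardinality (a witness of strictly smaller or larger cardinality would also witness the original ordering, contradicting uniqueness); connectivity of the transposition graph then forces a common size \(a\), common ground set \(Y\) with \(b_i = Y \setminus a_i\), and \(\{a_1,\ldots,a_N\} = Y^{(a)}\). As written, the equality half of your proposal is a plausible plan with the key steps missing, and one of its two claimed mechanisms (overlap creation) is actually impossible.
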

For a proof, we refer the reader to \cite{bollobas}.

Given an irredundant family \(\mathcal{A}\), we will fix a choice of private vertices, and deduce from Theorem \ref{thm:boll} an inequality involving the subcubes containing a fixed vertex \(x \in Q_{n}\); we will then sum this inequality over all \(x \in Q_{n}\) to prove bound (\ref{eq:bound1}).

\begin{theorem}[Meshulam, 1992]
\label{thm:meshulambound}
For any \(k \leq n\), if \(\mathcal{A}\) is an irredundant family of \(k\)-subcubes of \(\{0,1\}^{n}\), then
\[|\mathcal{A}| \leq \frac{2^n}{\sum_{i=0}^{k}{n \choose i}}{n \choose k}\]
\end{theorem}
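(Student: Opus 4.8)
The plan is to fix, for each subcube $C \in \mathcal{A}$, a private vertex $w_C$, and then to extract from Theorem~\ref{thm:boll} a \emph{local} inequality at every vertex $x \in \{0,1\}^n$ that controls how the private vertices of the subcubes through $x$ are distributed around $x$. Summing these local inequalities over all $2^n$ choices of $x$ and exchanging the order of summation will then yield the bound directly. The crucial point is that the subcubes through a fixed $x$, together with their private vertices, form precisely the kind of cross-intersecting system to which Bollob\'as' Inequality applies, with irredundancy supplying exactly the needed intersection condition.

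I would fix $x \in \{0,1\}^n$ and consider the subcubes $C \in \mathcal{A}$ with $x \in C$. For such a $C$, let $S_C$ be its set of $k$ moving coordinates and $T_C = [n] \setminus S_C$ its set of $n-k$ fixed coordinates. Since both $x$ and $w_C$ lie in $C$, they agree on every fixed coordinate, so $w_C \Delta x \subseteq S_C$. I would then set $a_C := w_C \Delta x$ and $b_C := T_C$, so that $|b_C| = n-k$ and $|a_C| = |w_C \Delta x| =: d_C \leq k$. The key observation is that $a_C \cap b_D = (w_C \Delta x) \cap T_D$ is empty precisely when $w_C$ agrees with $x$ on all fixed coordinates of $D$; since $x \in D$, this happens exactly when $w_C \in D$. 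Irredundancy now does the work: $w_C \in C$, while $w_C \notin D$ for every $D \neq C$ in the family because $w_C$ is private to $C$. Hence $a_C \cap b_D = \emptyset$ if and only if $C = D$, and Theorem~\ref{thm:boll} gives
\[\sum_{C \ni x} \binom{d_C + n - k}{n-k}^{-1} \leq 1.\]

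Summing this over all $x \in \{0,1\}^n$ and interchanging the two summations, the left-hand side becomes $\sum_{C \in \mathcal{A}} \sum_{x \in C} \binom{d(x,w_C) + n - k}{n-k}^{-1}$, where $d(x,w_C)$ is the Hamming distance. For a fixed $C$, exactly $\binom{k}{j}$ of the vertices $x \in C$ lie at distance $j$ from $w_C$ (choose which of the $k$ moving coordinates to flip), so the inner sum equals $\Sigma := \sum_{j=0}^k \binom{k}{j}\binom{n-k+j}{n-k}^{-1}$, independent of $C$. I therefore obtain $|\mathcal{A}| \cdot \Sigma \leq 2^n$. The last ingredient is the routine identity $\binom{n}{k}\binom{k}{j}\binom{n-k+j}{j}^{-1} = \binom{n}{k-j}$, which on summing over $j$ gives $\binom{n}{k}\,\Sigma = \sum_{i=0}^k \binom{n}{i}$; rearranging yields the claimed bound.

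The main obstacle is really the choice of encoding in the Bollob\'as step: one must hit upon the pairing $a_C = w_C \Delta x$, $b_C = T_C$ for which the cross-intersection condition is equivalent to the membership statement $w_C \in D$, so that irredundancy translates directly into the hypothesis of Theorem~\ref{thm:boll}. Once that encoding is in place the rest is bookkeeping, and the binomial simplification is purely mechanical. I would also remark that the same local inequality, crudely bounded by using $\binom{d_C+n-k}{n-k} \leq \binom{n}{k}$, already shows that every vertex lies in at most $\binom{n}{k}$ members of $\mathcal{A}$, which is the source of the cleaner ``at most $\binom{n}{k}$ private vertices in any Hamming ball of radius $k$'' formulation mentioned in the introduction.
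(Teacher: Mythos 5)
Your proposal is correct and follows essentially the same route as the paper: the same choice of private vertices, the same local Bollob\'as inequality at each vertex $x$ (your pairing $a_C = w_C \Delta x$, $b_C = T_C$ is exactly the paper's $w_i$, $v_i^c$ after translating $x$ to $\boldsymbol{0}$, a reduction the paper makes explicitly by symmetry), and the same double count and binomial identity to finish.
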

\begin{proof}
Let \(\mathcal{A}\) be an irredundant family of \(k\)-subcubes of \(\{0,1\}^{n}\), and for each subcube \(C \in \mathcal{A}\), choose a private vertex \(w_{C} \in C\).\\
\\
\textit{Claim:} For any \(x \in \{0,1\}^{n}\), 
\begin{equation}
\label{eq:boundsum1}
\sum_{C \in \mathcal{A}: x \in C}{|w_{C}\Delta x| + n-k \choose n-k}^{-1} \leq 1.
\end{equation}\\
\textit{Proof of Claim:}\\
This is an immediate consequence of Bollob\'as' Inequality. By symmetry, we may assume that \(x = \boldsymbol{0}\). Let \(\{C_{1},\ldots,C_{N}\}\) be the collection of subcubes in \(\mathcal{A}\) containing \(\boldsymbol{0}\). Each \(C_{i}\) is of the form \(\mathbb{P}v_{i}\) for some \(k\)-set \(v_{i}\). Let \(w_{i} = w_{C_{i}}\) be the private vertex chosen for \(C_{i}\). Notice that \(w_{i} \subset v_{j}\) if and only if \(i=j\), i.e. \(w_{i} \cap v_{j}^{c} = \emptyset\) if and only if \(i=j\), so applying Bollob\'as' Inequality gives:
\[\sum_{i=1}^{N}{|w_{i}| + |v_{i}^{c}| \choose |v_{i}^{c}|}^{-1} \leq 1,\]
i.e.
\begin{equation}
\label{eq:prob}
\sum_{i=1}^{N}{|w_{i}| + n-k \choose n-k}^{-1} \leq 1,
\end{equation}
proving the claim.

The inequality (\ref{eq:boundsum1}) expresses the fact that the private vertices chosen for the subcubes containing \(x\) cannot be too densely packed around \(x\). Summing (\ref{eq:boundsum1}) over all \(x \in \{0,1\}^{n}\), and interchanging the order of summation, we obtain:
\begin{eqnarray*}
2^{n} & \geq & \sum_{x \in \{0,1\}^{n}} \sum_{\substack{C \in \mathcal{A}:\\ x \in C}} {|w_{C}\Delta x| + n-k \choose n-k}^{-1}\\
&= & \sum_{C \in \mathcal{A}} \sum_{x \in C} {|w_{C}\Delta x| + n-k \choose n-k}^{-1}\\
& = & |\mathcal{A}|\sum_{l=0}^{k} \frac{{k \choose l}}{{l+n-k \choose n-k}}\\
& = & |\mathcal{A}|\sum_{l=0}^{k} \frac{k!(n-k)!l!}{l!(k-l)!(l+n-k)!}\\
& = & |\mathcal{A}|\frac{k!(n-k)!}{n!}\sum_{l=0}^{k} \frac{n!}{(k-l)!(n-(k-l))!}\\
& = & \frac{|\mathcal{A}|}{{n \choose k}}\sum_{l=0}^{k}{n \choose k-l}\\
& = & \frac{|\mathcal{A}|}{{n \choose k}}\sum_{l=0}^{k}{n \choose l}
\end{eqnarray*}
Hence,
\[|\mathcal{A}| \leq \frac{2^{n}}{\sum_{l=0}^{k}{n\choose l}}{n \choose k}\]
as required.
\end{proof}

As observed by Meshulam, for \(k \geq \tfrac{9}{10}n\), by standard estimates, the bound above is \(< {n \choose k} +1\), implying Conjecture \ref{conjecture:aharoni} in this case. More precisely, let
\[H_{2}(\gamma) = \gamma \log_{2}(1/\gamma)+(1-\gamma)\log_{2}(1/(1-\gamma))\]
denote the binary entropy function, and let \(\gamma_{0}\) be the unique solution of \(H_{2}(\gamma_{0})=\tfrac{1}{2}\) in \((\tfrac{1}{2},1)\), so that \(\gamma_{0} = 0.8900\) (to 4 d.p.); then we have the following 

\begin{corollary}
For \(n\) sufficiently large, and \(k \geq \gamma_{0}n\), any irredundant family of \(k\)-subcubes of \(\{0,1\}^{n}\) has size at most \({n \choose k}\).
\end{corollary}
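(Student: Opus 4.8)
The plan is to show that Meshulam's upper bound (Theorem \ref{thm:meshulambound}) is strictly less than \({n \choose k}+1\); since \(M(n,k)\) and \({n \choose k}\) are both integers, this forces \(M(n,k) \leq {n \choose k}\). Writing \(B = \frac{2^{n}}{\sum_{i=0}^{k}{n \choose i}}{n \choose k}\) for Meshulam's bound, I first rewrite the denominator using the symmetry \({n \choose i} = {n \choose n-i}\): since \(k \geq \gamma_{0}n > n/2\), we have \(\sum_{i=0}^{k}{n \choose i} = 2^{n} - S\), where \(S := \sum_{i=0}^{n-k-1}{n \choose i}\) is a tail summed over fewer than \(n/2\) indices, so in particular \(S < 2^{n-1}\) and \(2^{n}-S > 2^{n-1}\). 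A short computation gives \(B - {n \choose k} = {n \choose k}\frac{S}{2^{n}-S}\), so the desired inequality \(B < {n \choose k}+1\) is equivalent to \({n \choose k}\,S < 2^{n}-S\); as \(2^{n}-S > 2^{n-1}\), it suffices to prove the cleaner statement \({n \choose k}\,S = o(2^{n})\) as \(n \to \infty\).

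Next I would estimate the two factors. Since \(k \geq \gamma_{0}n\), the top index of the tail is \(n-k-1 < (1-\gamma_{0})n < n/2\), so the ratios of consecutive binomial coefficients \({n \choose j-1}/{n \choose j} = j/(n-j+1)\) are bounded away from \(1\) throughout the range of summation; a geometric-series argument then bounds \(S\) by a constant multiple of its largest term, \(S = O\left({n \choose n-k-1}\right) = O\left({n \choose k+1}\right)\). As \({n \choose k+1}/{n \choose k} = (n-k)/(k+1) < 1\) is also bounded, this yields \({n \choose k}\,S = O\left({n \choose k}^{2}\right)\).

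Finally I would invoke Stirling's approximation in the form \({n \choose k} = \Theta\!\left(2^{H_{2}(k/n)n}/\sqrt{n}\right)\), giving \({n \choose k}^{2} = \Theta\!\left(2^{2H_{2}(k/n)n}/n\right)\). The hypothesis \(k \geq \gamma_{0}n\), together with the fact that \(H_{2}\) is strictly decreasing on \((\tfrac{1}{2},1)\) with \(H_{2}(\gamma_{0}) = \tfrac{1}{2}\), gives \(H_{2}(k/n) \leq \tfrac{1}{2}\), hence \(2^{2H_{2}(k/n)n} \leq 2^{n}\) and therefore \({n \choose k}\,S = O(2^{n}/n) = o(2^{n})\), as required.

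The main obstacle is the boundary case \(k = \gamma_{0}n\), where the crude entropy estimates \(S \lesssim 2^{H_{2}(k/n)n}\) and \({n \choose k} \lesssim 2^{H_{2}(k/n)n}\) combine to give only \({n \choose k}\,S \lesssim 2^{n}\), which is not enough. The resolution is precisely the polynomial gain: the \(\sqrt{n}\) in the denominator of the Stirling estimate contributes a factor \(1/n\) to the product \({n \choose k}^{2}\), and this is what pushes the bound below \(2^{n-1}\) for \(n\) large even at \(\gamma = \gamma_{0}\). A secondary point requiring care is uniformity of the implied constants over the whole range \(k/n \in [\gamma_{0},1)\); this is harmless, since the constants in the tail and Stirling estimates stay bounded on any interval \([\gamma_{0},1-\delta]\), and for \(k/n\) near \(1\) both \({n \choose k}\) and \(S\) are only polynomially large, so their product is trivially \(o(2^{n})\).
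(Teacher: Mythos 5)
Your proposal is correct and follows essentially the same route as the paper's: both reduce the claim to showing that Meshulam's bound is strictly less than \(\binom{n}{k}+1\), i.e.\ to an inequality of the shape \(\binom{n}{k}\sum_{l=0}^{n-k-1}\binom{n}{l} = o(2^{n})\), bound the tail sum by a geometric series as \(O\bigl(\binom{n}{k+1}\bigr)\), and then apply Stirling's formula, the polynomial factor \(1/\sqrt{n}\) in \(\binom{n}{k}=\Theta\bigl(2^{H_{2}(k/n)n}/\sqrt{n}\bigr)\) being exactly what rescues the boundary case \(k/n=\gamma_{0}\) where the entropy exponents balance. The one slip is your parenthetical claim that \(\binom{n}{k}\) and \(S\) are ``only polynomially large'' for \(k/n\) near \(1\) --- they are exponentially large for \(k/n\in(1-\delta,1)\) with \(\delta>0\) fixed --- but the conclusion on that range follows anyway from the crude bound \(\binom{n}{j}\leq 2^{H_{2}(j/n)n}\), since \(H_{2}(\delta)<\tfrac{1}{2}\) for \(\delta<1-\gamma_{0}\), so no \(\sqrt{n}\) saving is needed away from the critical ratio.
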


In fact, Meshulam proved a generalization of Theorem \ref{thm:meshulambound} for irredundant families of \(k\)-dimensional subgrids of the \(n\)-dimensional grid \(\mathbb{Z}_{m}^{n}\). (A \(k\)-\textit{subgrid} of \(\mathbb{Z}_{m}^{n}\) is a subset of \(\mathbb{Z}_{m}^{n}\) the form
\[\{x \in \mathbb{Z}_{m}^{n}:\ x_{i}=a_{i}\ \forall i \in T\},\]
where \(T\) is a set of \(n-k\) coordinates, and the \(a_{i}\)'s are fixed elements of \(\mathbb{Z}_{m}\). A family of \(k\)-subgrids of \(\mathbb{Z}_{m}^{n}\) is said to be \textit{irredundant} if none of its subgrids is contained in the union of the others.) Meshulam proved the following:

\begin{theorem}[Meshulam, 1992]
Let \(\mathcal{A}\) be an irredundant family of \(k\)-subgrids of \(\mathbb{Z}_{m}^{n}\); then
\[|\mathcal{A}| \leq \frac{m^{n}}{\sum_{j=n-k}^{n}(m-1)^{j}{n \choose j}} (m-1)^{n-k}{n \choose k}\] 
\end{theorem}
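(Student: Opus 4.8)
The plan is to run the proof of Theorem~\ref{thm:meshulambound} essentially verbatim, replacing \(\{0,1\}^{n}\) by \(\mathbb{Z}_{m}^{n}\) and keeping track of the extra powers of \(m-1\) contributed by the larger alphabet. First I would fix, for each subgrid \(C \in \mathcal{A}\), a private vertex \(w_{C}\), and aim to prove the very same per-point inequality as in the cube case: for every \(x \in \mathbb{Z}_{m}^{n}\),
\[\sum_{C \in \mathcal{A}:\ x \in C} {|w_{C} \Delta x| + n-k \choose n-k}^{-1} \leq 1,\]
where \(|w_{C} \Delta x|\) denotes the number of coordinates in which \(w_{C}\) and \(x\) differ. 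I will refer to this as \((\ast)\).

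To prove \((\ast)\) I would use the translation symmetry of \(\mathbb{Z}_{m}^{n}\): translating by \(-x\) carries subgrids to subgrids and preserves both irredundancy and private vertices, so I may assume \(x = \boldsymbol{0}\). List the subgrids of \(\mathcal{A}\) through \(\boldsymbol{0}\) as \(C_{1},\ldots,C_{N}\), with moving-coordinate sets \(S_{i}\) (so \(|S_{i}|=k\)) and fixed-coordinate sets \(T_{i}=[n]\setminus S_{i}\); since \(\boldsymbol{0} \in C_{i}\), each fixed value is \(0\). The key observation is that a vertex \(v\) lies in \(C_{j}\) if and only if \(v\) vanishes on \(T_{j}\), i.e. iff \(\mathrm{supp}(v) \subseteq S_{j}\) --- so membership in the subgrids through \(\boldsymbol{0}\) depends only on the support, not on the actual nonzero values. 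Writing \(D_{i}=\mathrm{supp}(w_{i})\), we thus have \(w_{i} \in C_{j} \iff D_{i} \cap T_{j} = \emptyset\), and privacy gives \(D_{i} \cap T_{j} = \emptyset\) iff \(i=j\). Applying Bollob\'as' Inequality (Theorem~\ref{thm:boll}) to the pairs \((a_{i},b_{i})=(D_{i},T_{i})\), and noting \(|b_{i}|=n-k\) and \(|a_{i}|=|w_{i}\Delta \boldsymbol{0}|\), gives exactly \((\ast)\). (If some \(w_{i}=\boldsymbol{0}\) then no other subgrid of the list contains \(\boldsymbol{0}\), so \(N=1\) and \((\ast)\) is trivial.)

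With \((\ast)\) established, I would sum it over all \(m^{n}\) points of \(\mathbb{Z}_{m}^{n}\) and interchange the order of summation, just as in Theorem~\ref{thm:meshulambound}. The one new point in the bookkeeping is the count of \(x \in C\) at a given distance from \(w_{C}\): since \(x\) and \(w_{C}\) agree on the \(n-k\) fixed coordinates and may differ arbitrarily on the \(k\) moving ones, the number of \(x \in C\) with \(|w_{C}\Delta x|=l\) is \({k \choose l}(m-1)^{l}\) rather than \({k \choose l}\). This yields
\[m^{n} \geq |\mathcal{A}| \sum_{l=0}^{k} \frac{{k \choose l}(m-1)^{l}}{{l+n-k \choose n-k}}.\]
I would then simplify the inner sum using the identity \({k \choose l}\big/{l+n-k \choose n-k} = {n \choose k-l}\big/{n \choose k}\) (implicit in the cube computation) and re-index by \(j=n-k+l\), obtaining \(\sum_{l}\cdots = \big(\sum_{j=n-k}^{n}(m-1)^{j}{n \choose j}\big)\big/\big((m-1)^{n-k}{n \choose k}\big)\), which rearranges to the claimed bound.

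I expect no serious obstacle. The whole point is the observation that, at a fixed base point, the alphabet size is irrelevant to membership in the subgrids through that point --- everything collapses to the support --- so that Bollob\'as' Inequality applies completely unchanged; the alphabet re-enters only through the weight \((m-1)^{l}\) in the final averaging. The only things needing care are the degenerate support case and the binomial identity, both routine; setting \(m=2\) recovers Theorem~\ref{thm:meshulambound}, a reassuring consistency check.
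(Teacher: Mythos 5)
Your proposal is correct and follows essentially the same route as the paper's own proof: translate so that $x=\boldsymbol{0}$, observe that membership in a subgrid through $\boldsymbol{0}$ depends only on the support of the vertex, apply Bollob\'as' Inequality to the supports of the private vertices against the fixed-coordinate sets, and then sum over all $m^{n}$ points with the weight $\binom{k}{l}(m-1)^{l}$. The binomial simplification and the re-indexing $j=n-k+l$ are exactly as in the paper, and your remark on the degenerate case $w_{i}=\boldsymbol{0}$ is a small point the paper leaves implicit.
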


We remark that our proof generalizes straightforwardly to prove this also.

A slight modification of our method yields a result which gives us more `geometrical' insight into the problem:

\begin{theorem}
\label{thm:hammingballbound}
Let \(B\) be a Hamming ball of radius \(k\) in \(\{0,1\}^{n}\). If \(\mathcal{A}\) is an irredundant family of \(k\)-subcubes of \(\{0,1\}^{n}\), each with a private vertex in \(B\), then \(|\mathcal{A}| \leq {n \choose k}\).
\end{theorem}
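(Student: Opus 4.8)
The plan is to reduce the statement to a counting inequality for the chosen private vertices and then to attack that inequality with Bollob\'as' Inequality, exactly as in the proof of Theorem \ref{thm:meshulambound}. By symmetry I may assume that \(B\) is the Hamming ball of radius \(k\) centred at \(\boldsymbol{0}\), so that \(B = \{y : |y| \leq k\}\). For each \(C \in \mathcal{A}\) I fix a private vertex \(w_{C} \in C \cap B\) (one exists by hypothesis); since \(w_{C}\) lies in no other member of \(\mathcal{A}\), the \(w_{C}\) are distinct and, more importantly, for \(C, C' \in \mathcal{A}\) we have \(w_{C'} \in C\) if and only if \(C = C'\). Writing \(0(C), 1(C)\) for the fixed \(0\)- and \(1\)-coordinates of \(C\) and \(S_{C}\) for its moving coordinates, membership \(w_{C'} \in C\) means precisely that \(w_{C'} \cap 0(C) = \emptyset\) and \(1(C) \subseteq w_{C'}\). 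Thus it suffices to show that the number of such subcubes is at most \({n \choose k}\).

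The model case is when every \(C\) passes through \(\boldsymbol{0}\), i.e. \(1(C) = \emptyset\) for all \(C\). Then \(w_{C'} \in C\) reduces to the single condition \(w_{C'} \cap T_{C} = \emptyset\), where \(T_{C} = 0(C)\) is the set of \(n-k\) fixed coordinates. Taking \(a_{C} = w_{C}\) and \(b_{C} = T_{C}\), the relation \(a_{C} \cap b_{C'} = \emptyset \iff C = C'\) is exactly the hypothesis of Bollob\'as' Inequality, which gives \(\sum_{C} {|w_{C}| + (n-k) \choose n-k}^{-1} \leq 1\). Since \(w_{C} \in B\) forces \(|w_{C}| \leq k\), every summand is at least \({n \choose k}^{-1}\), and we obtain \(|\mathcal{A}| \leq {n \choose k}\). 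This is Meshulam's claim specialised to the centre, and it is sharp: for \(\mathcal{F}_{\boldsymbol{0}}\) each \(w_{C}\) is a \(k\)-set and every summand equals \({n\choose k}^{-1}\).

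The difficulty in the general case is the \emph{containment} condition \(1(C) \subseteq w_{C'}\): unlike the avoidance condition \(w_{C'} \cap 0(C) = \emptyset\), it cannot be encoded as the emptiness of a single intersection of two subsets of \([n]\) without complementing \(w_{C'}\), and doing so inflates one side of each Bollob\'as pair to size \(n\), which destroys the bound. I would get around this by not insisting on \(w_{C}\) itself: since \(|1(C)| \leq |w_{C}| \leq k\), each \(C\) contains a vertex of weight exactly \(k\), i.e. a \(k\)-set \(v_{C}\) with \(1(C) \subseteq v_{C} \subseteq 1(C) \cup S_{C}\). Assigning to \(C\) the complementary Bollob\'as pair \((v_{C}, [n]\setminus v_{C})\) makes every term equal to \({n \choose k}^{-1}\), so the whole problem collapses to choosing the representatives \(v_{C}\) to be \emph{distinct} — that is, to finding a system of distinct representatives for the families \(K_{C}\) of weight-\(k\) vertices of the subcubes \(C \in \mathcal{A}\). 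The main obstacle is verifying Hall's condition for this system: one must show that the union of any subfamily's subcubes contains at least as many weight-\(k\) vertices as there are subcubes, and it is precisely here that irredundancy — the existence of the private vertices \(w_{C} \in B\), together with the relation \(w_{C'} \in C \iff C = C'\) — has to be fed in. I expect this verification, equivalently the replacement of the avoidance-only Bollob\'as argument above by one that survives the presence of the forced \(1\)-coordinates, to be the crux of the proof.
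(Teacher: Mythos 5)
Your reduction is sound as far as it goes, and your model case (all subcubes through the centre of \(B\)) is a correct application of Bollob\'as' Inequality: it is essentially the claim in the proof of Theorem \ref{thm:meshulambound} evaluated at \(x=\boldsymbol{0}\), combined with the observation that \(|w_{C}|\leq k\) forces every summand to be at least \(\binom{n}{k}^{-1}\). But the proposal stops exactly where the theorem starts. You reduce the general case to finding a system of distinct representatives among the weight-\(k\) vertices of the subcubes, note that this amounts to verifying Hall's condition, and then declare that verification to be ``the crux'' without attempting it. That verification is not a routine check --- it is the entire content of the theorem beyond the special case you handled. Note also that once the representatives \(v_{C}\) are distinct, the bound \(|\mathcal{A}|\leq\binom{n}{k}\) is immediate by injectivity into \([n]^{(k)}\), so the ``complementary Bollob\'as pair'' \(\left(v_{C},[n]\setminus v_{C}\right)\) does no work; the difficulty has simply been renamed. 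As written, the proof is incomplete.

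For comparison, the paper avoids the integral SDR entirely by constructing a fractional one. Writing \(C=\{y: v_{C}\subseteq y\subseteq u_{C}\}\) and letting \(C'=\{y: w_{C}\subseteq y\subseteq u_{C}\}\) be the sub-subcube between the private vertex and the top vertex (this is where the hypothesis \(w_{C}\in B\), i.e. \(|w_{C}|\leq k\), enters: it guarantees \(C'\) contains weight-\(k\) vertices), each \(C\) spreads unit weight uniformly over the \(\binom{|v_{C}|+k-|w_{C}|}{k-|w_{C}|}\) sets in \(C'\cap[n]^{(k)}\). The key claim is that each \(x\in[n]^{(k)}\) receives total weight at most \(1\); this follows from Bollob\'as' Inequality applied to the pairs \((v_{i},[k]\setminus w_{i})\) ranging over the subcubes with \(x\in C_{i}'\), using the fact that \(v_{i}\subseteq w_{j}\) together with \(w_{j}\subseteq x\subseteq u_{i}\) forces \(w_{j}\in C_{i}\), hence \(i=j\) by privacy. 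Summing over \(x\) gives \(|\mathcal{A}|\leq\binom{n}{k}\). (By integrality of the bipartite matching polytope, this fractional argument does imply that your Hall condition holds --- but that is a consequence of the proof you would need to write, not a substitute for it.) To complete your route you must supply a direct proof that every subfamily \(\mathcal{A}'\) satisfies \(\bigl|\bigcup_{C\in\mathcal{A}'}(C\cap[n]^{(k)})\bigr|\geq|\mathcal{A}'|\), and it is not clear that this can be done more easily than by the fractional argument above.
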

\begin{proof}
By symmetry, we may assume that \(B = [n]^{(\leq k)}\). Let \(\mathcal{A}\) be an irredundant family of \(k\)-subcubes, each with a private vertex in \([n]^{(\leq k)}\). For each subcube \(C \in \mathcal{A}\), choose a private vertex \(w_{C}\in [n]^{(\leq k)}\). Write \(C = \{y \in Q_{n}:\ v_{C} \subset y \subset u_{C}\}\); we will call \(v_{C}\) the `start vertex' of \(C\) and \(u_{C}\) its `end vertex'. Let \(C' = \{y \in Q_{n}:\ w_{C}\subset y \subset u_{C}\}\) be the \((k-|w_{C}|+|v_{C}|)\)-dimensional sub-subcube of \(C\) between the private vertex and the end vertex of \(C\).\\
\\
\textit{Claim:} For any vertex \(x \in [n]^{(k)}\),
\begin{equation}
\label{eq:boundsum2}
\sum_{C \in \mathcal{A}:\ x \in C'}{|v_{C}| + k-|w_{C}| \choose k-|w_{C}|}^{-1} \leq 1
\end{equation}\\
\textit{Proof of Claim:}\\
As before, this is an immediate consequence of Bollob\'as' Inequality. By symmetry, we may assume that \(x = [k]\). Write \(\{C\in \mathcal{A}:\ x \in C'\} = \{C_{1},\ldots,C_{N}\}\). Let \(v_{i} = v_{C_{i}}\) be the start vertex of \(C_{i}\) and \(w_{i} = w_{C_{i}}\) its private vertex. Clearly, \(v_{i},w_{i} \subset [k]\) for every \(i \in [N]\). Notice that \(v_{i} \subset w_{j}\) if and only if \(i=j\), i.e. \(v_{i} \cap ([k]\setminus w_{j}) = \emptyset\) if and only if \(i=j\). Hence, Bollob\'as' Inequality gives:
\[\sum_{i=1}^{N}{|v_{i}|+k-|w_{i}| \choose k-|w_{i}|}^{-1} \leq 1\]
and the claim is proved.

Summing (\ref{eq:boundsum2}) over all \(x \in [n]^{(k)}\), and interchanging the order of summation, we obtain: 

\begin{eqnarray*}
{n \choose k} & \geq & \sum_{x \in [n]^{(k)}} \sum_{\substack{C \in \mathcal{A}:\\ x \in C'}} {|v_{C}| + k-|w_{C}| \choose k-|w_{C}|}^{-1} \\
& = & \sum_{C \in \mathcal{A}} \sum_{ x \in C' \cap [n]^{(k)}}{|v_{C}| + k-|w_{C}| \choose k-|w_{C}|}^{-1}\end{eqnarray*}

For each subcube \(C \in \mathcal{A}\), the \((k-|w_{C}| +|v_{C}|)\)-dimensional subcube \(C'\) contains \({k-|w_{C}| +|v_{C}| \choose k-|w_{C}|}\) vertices \(x \in [n]^{(k)}\), and for each of them contributes \({|v_{C}| + k-|w_{C}| \choose k-|w_{C}|}^{-1}\) to the above sum, i.e. a total of 1. Hence,
\[|\mathcal{A}| = \sum_{C \in \mathcal{A}} \sum_{x \in C' \cap [n]^{(k)}:\ x \in C'}{|v_{C}| + k-|w_{C}| \choose k-|w_{C}|}^{-1}\leq {n \choose k},\]
proving the theorem.
\end{proof}

We have equality in Theorem \ref{thm:hammingballbound} if \(\mathcal{A}\) is the family of all \(k\)-subcubes through the centre of \(B\). Notice that by fixing some choice of private vertices and averaging over all Hamming balls \(B\) of radius \(k\), Theorem \ref{thm:hammingballbound} immediately implies Theorem \ref{thm:meshulambound}.

When \(n=2k+1\), the irredundant family of all \(k\)-subcubes containing either \(\boldsymbol{0}\) or \(\boldsymbol{1}\) has size \(2{n \choose k}\), so we have equality in Theorem \ref{thm:meshulambound} when \(n = 2k+1\).

We have been unable to find a counterexample to Conjecture \ref{conjecture:aharoni}. Notice that by the same projection argument as in Corollary 6 (see later), if the conjecture holds for \(n,k\) then it holds for \(n+1,k+1\), so it suffices to consider the case \(n=2k-1\). For \(n=5,k=3\), the conjecture can be verified by hand, but there are exactly two extremal families up to isomorphism (permuting the coordinates and translating): \(\mathcal{F}_{\boldsymbol{0}}\) and the following family of ten \(3\)-subcubes of \(Q_{5}\), five through \(\boldsymbol{0}\) and five through \(\boldsymbol{1}\). The (unique) private vertices are indicated above the moving coordinates:\\
\\
\((\stackrel{1}{*},\stackrel{0}{*},\stackrel{1}{*},0,0)\)\\*
\((0,\stackrel{1}{*},\stackrel{0}{*},\stackrel{1}{*},0)\)\\*
\((0,0,\stackrel{1}{*},\stackrel{0}{*},\stackrel{1}{*})\)\\*
\((\stackrel{1}{*},0,0,\stackrel{1}{*},\stackrel{0}{*})\)\\*
\((\stackrel{0}{*},\stackrel{1}{*},0,0,\stackrel{1}{*})\)\\*
\((\stackrel{0}{*},\stackrel{1}{*},\stackrel{0}{*},1,1)\)\\*
\((1,\stackrel{0}{*},\stackrel{1}{*},\stackrel{0}{*},1)\)\\*
\((1,1,\stackrel{0}{*},\stackrel{1}{*},\stackrel{0}{*})\)\\*
\((\stackrel{0}{*},1,1,\stackrel{0}{*},\stackrel{1}{*})\)\\*
\((\stackrel{1}{*},\stackrel{0}{*},1,1,\stackrel{0}{*})\)\\
\\
Clearly, this family is not of the form \(\mathcal{F}_{x}\) for any \(x \in \{0,1\}^{5}\). However, we have been unable to find another such example, and we conjecture that for \(n > 5\) and \(k > n/2\), the only irredudant families of \(k\)-subcubes of \(\{0,1\}^{n}\) with size \({n \choose k}\) are of the form \(\mathcal{F}_{x}\) for \(x \in \{0,1\}^{n}\).

The best upper bound for \(n = 2k-1\) is still Meshulam's bound, which in this case is:

\begin{eqnarray*}
M(2k-1,k) & \leq & \frac{2^{2k-1}}{2^{2k-2}+{2k-1\choose k}}{2k-1\choose k}\\
& = & \frac{2}{1+2^{-(2k-2)}{2k-1 \choose k}}{2k-1 \choose k}\\
& = & \frac{2}{1+2(1+o(1))/\sqrt{(2k-1)\pi}}{2k-1 \choose k}\\
& = & 2(1-\Theta(1/\sqrt{k})){2k-1 \choose k}.
\end{eqnarray*}

To construct a large irredundant family when \(k \geq n/2\), one might try just using subcubes containing \(\boldsymbol{0}\) or \(\boldsymbol{1}\), so that the \(k\)-subcubes containing \(\boldsymbol{0}\) have private vertices in \([n]^{(\leq k)}\), and the \(k\)-subcubes containing \(\boldsymbol{1}\) have private vertices in \([n]^{(\geq n-k)}\). However, a surprising linear algebra argument shows that even when \(n=2k\), such a family has size at most \({n \choose k}\):

\begin{theorem}
\label{thm:topbottom}
If \(\mathcal{A}\) is an irredundant family of \(k\)-subcubes of \(\{0,1\}^{2k}\) which contain \(\boldsymbol{0}\) or \(\boldsymbol{1}\), then \(|\mathcal{A}| \leq {2k \choose k}\).
\end{theorem}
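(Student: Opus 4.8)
My plan is to reduce the statement to a question about two families of \(k\)-sets and then exploit the special geometry of the middle layer of \(\{0,1\}^{2k}\). First I would fix coordinates. Every \(k\)-subcube through \(\boldsymbol{0}\) has all its fixed coordinates equal to \(0\), so it has the form \(\mathbb{P}v = \{y : y \subseteq v\}\) for a \(k\)-set \(v\); likewise every \(k\)-subcube through \(\boldsymbol{1}\) is \(U_w = \{y : y \supseteq w^{c}\}\) for a \(k\)-set \(w\), and no subcube is of both types since \(2k-k = k \geq 1\). Writing \(\mathcal{V},\mathcal{W}\) for the corresponding sets of moving sets and \(a=|\mathcal{V}|,\,b=|\mathcal{W}|\), the goal becomes \(a+b \leq {2k \choose k}\). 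The feature of \(n=2k\) I would lean on throughout is that two \(k\)-sets are disjoint iff they are complementary; concretely, \(\mathbb{P}v \cap U_w\) is empty unless \(w=v^{c}\), in which case it is the single vertex \(v\), which lies in the middle layer \([2k]^{(k)}\).

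Next I would restrict the characteristic functions to \([2k]^{(k)}\), a set of exactly \({2k \choose k}\) points. On this layer \(\chi_{\mathbb{P}v}\) is the indicator of \(v\) and \(\chi_{U_w}\) is the indicator of \(w^{c}\), so the restriction map carries the characteristic functions (linearly independent, as in the proof of Proposition~\ref{proposition:aharoni}) onto standard basis vectors of \(\mathbb{R}^{[2k]^{(k)}}\). Since the images are basis vectors, the number of distinct ones is \(|\mathcal{V}\cup\mathcal{W}^{c}| = a+b-c\), whence \(a+b-c \leq {2k \choose k}\), where \(c\) counts the \emph{collision pairs}: \(k\)-sets \(v\) with \(v\in\mathcal{V}\) and \(v^{c}\in\mathcal{W}\), i.e.\ pairs \(\mathbb{P}v, U_{v^{c}}\) sharing the middle vertex \(v\). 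It then remains to absorb the defect \(c\).

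To control \(c\) I would use irredundancy. For a collision pair the shared vertex \(v\) lies in both \(\mathbb{P}v\) and \(U_{v^{c}}\), so it is a private vertex of neither; hence \(\mathbb{P}v\) has a private vertex strictly below \(v\) (level \(<k\)) and \(U_{v^{c}}\) one strictly above \(v\) (level \(>k\)). More generally, a short argument shows that the chosen private vertices of the \(\boldsymbol{0}\)-subcubes form an antichain in levels \(\leq k\), those of the \(\boldsymbol{1}\)-subcubes an antichain in levels \(\geq k\), and all \(a+b\) of them are distinct. Applying Sperner or LYM to the two antichains \emph{separately} gives only \(a \leq {2k \choose k}\) and \(b \leq {2k \choose k}\), which is exactly why a new idea is needed.

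The crux -- and the surprising linear-algebraic step -- is to \emph{couple} the two antichains into a single bound \(\leq {2k \choose k}\) by constructing one injective linear map from the span of the characteristic functions into a \({2k \choose k}\)-dimensional space that separates the collision pairs. The middle-layer map fails precisely on the differences \(\chi_{\mathbb{P}v}-\chi_{U_{v^{c}}}\), which are supported off the middle layer, below and above \(v\); the idea is to repair it using the off-middle private vertices found above. A natural vehicle is the exterior power \(\wedge^{k}(\mathbb{R}^{2k})\), of dimension \({2k \choose k}\): for generic \(u_{1},\ldots,u_{2k}\) the vectors \(u_{S}=\bigwedge_{i\in S}u_{i}\) form a basis, and \(u_{S}\wedge u_{T}\neq 0\) exactly when \(S,T\) are complementary -- the same complementarity dichotomy as above -- which should turn the private-vertex incidences into a triangular pairing. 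I expect the main difficulty to be exactly the degeneracy created by the collision (and parallel) pairs: subcubes whose natural vectors coincide, so that plain independence fails and the below/above private-vertex data must be injected into the pairing to restore nonsingularity. Making this coupling precise -- showing that the extra structure of the collision private vertices suffices to recover the full \({2k \choose k}\) bound -- is where the real work lies.
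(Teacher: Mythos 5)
Your reduction is sound as far as it goes, and it arrives at exactly the right inequality: writing \(R\) for the set of middle-layer vertices \(u\) with \(u \notin \mathcal{V}\) and \(u^{c} \notin \mathcal{W}\), your identity \(|\mathcal{V} \cup \mathcal{W}^{c}| = a+b-c\) together with \(|\mathcal{V} \cup \mathcal{W}^{c}| = {2k \choose k} - |R|\) shows that the theorem is equivalent to \(c \leq |R|\), i.e.\ that the collision pairs are outnumbered by the middle-layer vertices missed by both families. But this is the entire content of the theorem, and your proposal does not prove it: the middle-layer restriction is only the trivial count of distinct singletons, the two separate Sperner bounds give \(a+b \leq 2{2k \choose k}\) as you note, and the final paragraph about exterior powers is an unexecuted heuristic --- you yourself flag it as ``where the real work lies''. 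Nothing in the proposal explains why a collision at \(v\) must be paid for by an uncovered middle-layer vertex, nor why distinct collisions are paid for, in aggregate, by distinct uncovered vertices; without that, \(c\) could a priori exceed \(|R|\) and the defect is never absorbed.

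The paper's mechanism for \(c \leq |R|\) is worth comparing with your sketch, because it starts from the observation you do make (a collision pair has private vertices strictly below and strictly above level \(k\)) but then normalizes them: for the \(i\)-th collision \(v_{i}\) one pushes the private vertex of \(\mathbb{P}v_{i}\) up to a private vertex \(b_{i}\) at level exactly \(k-1\) with \(b_{i} \subset v_{i}\), and the private vertex of the companion subcube down to a \(c_{i}\) at level exactly \(k+1\) with \(v_{i} \subset c_{i}\) (a superset of a private vertex lying inside the same subcube is still private, because every competing subcube is a down-set \(\mathbb{P}x\) or an up-set \(\mathbb{U}x\)). Setting \(B_{i}' = \{x \in [2k]^{(k)} : b_{i} \subset x\} \cap R\) and \(C_{i}' = \{x \in [2k]^{(k)} : x \subset c_{i}\} \cap R\), privateness of the \(b_{i}\)'s and \(c_{i}\)'s forces \(|B_{i}' \cap C_{i}'| = 1\) while \(|B_{i}' \cap C_{j}'| \in \{0,2\}\) for \(i \neq j\); the mod-\(2\) cross-intersection lemma (the characteristic vectors of the \(B_{i}'\) over \(\mathbb{F}_{2}\) are linearly independent, since pairing with \(\chi_{C_{j}'}\) extracts coefficients) then gives \(c \leq |R|\). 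So the ``coupling'' you are looking for is not an exterior-algebra pairing on the middle layer but a parity pairing of up-shadows and down-shadows inside \(R\) itself; this is the step you must supply to complete the argument.
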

\begin{proof}
Let \(\mathcal{A}\) be an irredundant family of \(k\)-subcubes of \(\{0,1\}^{2k}\) which all contain either \(\boldsymbol{0}\) or \(\boldsymbol{1}\). We may assume that \(\mathcal{A}\) is maximal with respect to this condition. For \(v \in [2k]^{(k)}\), we write
\[\mathbb{U}v := \{y: v \subset y \subset [2k]\}\]
for the \(k\)-subcube between \(v\) and \([2k]\).

We partition the vertices of the middle layer \([2k]^{(k)}\) into three sets:
\begin{eqnarray*}
S & = & \{v \in [2k]^{(k)}: \mathbb{P}v,\mathbb{U}v \in \mathcal{A}\};\\
T & = & \{v \in [2k]^{(k)}: \textrm{ exactly one of }\mathbb{P}v\textrm{ and }\mathbb{U}v \textrm{ is in }\mathcal{A}\};\\
R & = & \{v \in [2k]^{(k)}: \mathbb{P}v \notin \mathcal{A},\mathbb{U}v \notin \mathcal{A}\}.
\end{eqnarray*}
Notice that
\[|\mathcal{A}| = {2k \choose k} + |S| - |R|;\]
we must show that \(|S| \leq |R|\).

Write \(S = \{v_{1},\ldots,v_{N}\}\). For each \(v_{i} \in S\), \(\mathbb{P}v_{i}\) must have a private vertex \(w_{i} \in [2k]^{(\leq k-1)}\). If \(|w_{i}| < k-2\), then we may choose \(b_{i}\in [2k]^{(k-1)}\) such that \(w_{i} \subset b_{i} \subset v_{i}\); \(b_{i}\) must also be a private vertex for \(\mathbb{P}v_{i}\), since any subcube containing both \(\boldsymbol{0}\) and \(b_{i}\) must contain \(w_{i}\) as well. Similarly, we may choose a private vertex \(c_{i} \in [2k]^{(k+1)}\) for \(\mathbb{U}v_{i}\). Each point of \(T\) is a private vertex for the subcube in \(\mathcal{A}\) containing it. Let \(\mathcal{B} = \{b_{1},\ldots,b_{N}\}\), and let \(\mathcal{C} = \{c_{1},\ldots,c_{N}\}\). Then we can choose all the private vertices to lie in \(T \cup \mathcal{B} \cup \mathcal{C}\). For each \(i\), let
\[B_{i} = \{x \in [2k]^{(k)}: b_{i} \subset x\},\quad C_{i} = \{x \in [2k]^{k}: x \subset c_{i}\}\]
be the neighbourhoods of \(b_{i}\) and \(c_{i}\) in \([2k]^{(k)}\). First, we claim that
\[\left(\bigcup_{i=1}^{N}B_{i}\right) \cap \left(\bigcup_{i=1}^{N}C_{i}\right) = S \cup R.\]
To see this, take \(x \in (\cup_{i=1}^{N}B_{i}) \cap (\cup_{i=1}^{N}C_{i});\) then \(b_{i} \subset x \subset c_{j}\) for some \(i\) and \(j\). Suppose \(\mathbb{P}x \in \mathcal{A}\); then \(b_{i} \in \mathbb{P}x\), so \(x=v_{i} \in S\), i.e. \(\mathbb{U}x \in \mathcal{A}\) as well. Similarly, if \(\mathbb{U}x \in \mathcal{A}\), then \(\mathbb{P}x \in \mathcal{A}\) as well. Hence, \((\cup_{i=1}^{N}B_{i}) \cap (\bigcup_{i=1}^{N}C_{i}) \subset S \cup R\).

Clearly, \(S \subset (\cup_{i=1}^{N}B_{i}) \cap (\cup_{i=1}^{N}C_{i})\), as \(b_{i} \subset v_{i} \subset c_{i}\) for every \(i\). If \(x \in R\), then by the maximality of \(\mathcal{A}\), \(\mathbb{P}x\) must contain some \(b_{i}\) (otherwise it could be added to \(\mathcal{A}\) to produce a larger irredundant family), and similarly \(\mathbb{U}x\) must contain some \(c_{j}\). Hence, \(x \in (\cup_{i=1}^{N}B_{i}) \cap (\cup_{i=1}^{N}C_{i})\). It follows that \(R \subset (\cup_{i=1}^{N}B_{i}) \cap (\cup_{i=1}^{N}C_{i})\) as well, proving the claim.

For each \(i\), let \(B_{i}' = B_{i} \cap R = B_{i} \setminus S\), and let \(C_{i}'=C_{i} \cap R = C_{i} \setminus S\); then \(B_{i}',C_{i}' \subset R\) for each \(i\). We claim that
\begin{equation}
\label{eq:paritycondition}
|B_{i}' \cap C_{i}'| = 1 \textrm{ for each }i,\textrm{ and } |B_{i}' \cap C_{j}'| = 0 \textrm{ or }2\ \textrm{ for each } i \neq j.
\end{equation}
To see this, first observe that for each \(i\),
\[B_{i} \cap C_{i} = \{x\in [2k]^{(k)}: \ b_{i} \subset x \subset c_{i}\} = \{v_{i},y_{i}\}\]
for some \(y_{i} \in R\), and therefore
\[B_{i}' \cap C_{i}' = \{y_{i}\}.\]
For each \(i \neq j\), if \(b_{i} \nsubseteq c_{j}\), then
\[B_{i} \cap C_{j}=\emptyset\]
and therefore
\[B_{i}' \cap C_{j}' = \emptyset.\]
If \(b_{i} \subset c_{j}\), then \(B_{i} \cap C_{j} = \{x \in [2k]^{(k)}: \ b_{i} \subset x \subset c_{j}\}\) has size 2, and cannot contain a point of \(S\), since if \(b_{i}\subset v_{l}\subset c_{j}\), then \(i=j=l\). Hence, \(B_{i}' \cap C_{j}'\) also has size 2, proving (\ref{eq:paritycondition}). 

We recall the following easy lemma, the \(p=2\) case of which appears in \cite{babaifrankl}:

\begin{lemma}
Let \(p\) be prime. If \(F_{1},\ldots,F_{N},G_{1},\ldots,G_{N} \subset [m]\) are such that
\begin{eqnarray*}
& |F_{i} \cap G_{j}| \equiv 0 \mod p & \forall i \neq j\\
and & |F_{i} \cap G_{i}| \not \equiv 0 \mod p  &\forall i,
\end{eqnarray*}
then
\[N \leq m.\]
\end{lemma}
\begin{proof}
Let \(\chi_{F}\) be the characteristic function of \(F \subset [m]\). Consider it as an element of the \(m\)-dimensional vector space \(\mathbb{F}_{p}^{m}\) over \(\mathbb{F}_{p}\). Observe that \(\{\chi_{F_{1}},\ldots,\chi_{F_{N}}\}\) is linearly independent over \(\mathbb{F}_{p}\). To see this, suppose
\[\sum_{i=1}^{N}r_{i}\chi_{F_{i}} = 0\]
for some \(r_{1},\ldots,r_{N} \in \mathbb{F}_{p}\). Taking the inner product of the above with \(\chi_{G_{j}}\) gives \(r_{j} = 0\). Hence, \(N \leq m\) as required.
\end{proof}
Applying the \(p=2\) case of this lemma to the sets \(B_{1}',\ldots,B_{N}',C_{1}',\ldots,C_{N}' \subset R\) shows that \(|S| \leq |R|\), proving the theorem.
\end{proof} 

We immediately obtain the same result for all \(n \leq 2k\), by induction on \(n\) for fixed codimension \(c=n-k\), using a projection argument:

\begin{corollary}
\label{corr:topbottom}
Let \(n \leq 2k\). If \(\mathcal{A}\) is an irredundant family of \(k\)-subcubes of \(\{0,1\}^{n}\) which contain \(\boldsymbol{0}\) or \(\boldsymbol{1}\), then \(|\mathcal{A}| \leq {n \choose k}\).
\end{corollary}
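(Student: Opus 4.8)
The plan is to fix the codimension $c = n-k$ and induct on $n$. The base case is $n = 2c$, that is $k = c$ and $n = 2k$, which is precisely Theorem~\ref{thm:topbottom}. For the inductive step I will assume the bound holds for $k$-subcubes of $\{0,1\}^n$ (where $n = k+c$, so $n \le 2k$) and deduce it for $(k+1)$-subcubes of $\{0,1\}^{n+1}$; the new codimension is still $c$ and $n+1 \ge 2c$, so both pairs lie in the range $n \le 2k$ where the statement makes sense. Throughout, I will take the family $\mathcal{A}$ of $(k+1)$-subcubes of $\{0,1\}^{n+1}$ through $\boldsymbol{0}$ or $\boldsymbol{1}$ to be maximal with respect to irredundancy, since maximality is needed for the delicate part of the argument.

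The reduction is a projection deleting the last coordinate. Let $\pi\colon \{0,1\}^{n+1} \to \{0,1\}^n$ forget coordinate $n+1$, and split $\mathcal{A} = \mathcal{A}_{*} \cup \mathcal{A}_{\mathrm{fix}}$ according to whether coordinate $n+1$ is moving or fixed in a given subcube. For $C \in \mathcal{A}_{*}$ the image $\pi(C)$ is a $k$-subcube of $\{0,1\}^n$, still through $\boldsymbol{0}$ or $\boldsymbol{1}$. Because coordinate $n+1$ is moving in every member of $\mathcal{A}_{*}$, the containment relations among these subcubes and private vertices are unaffected by the deleted coordinate: $\pi$ is injective on $\mathcal{A}_{*}$, and a private vertex $w_C$ of $C$ in $\{0,1\}^{n+1}$ projects to a private vertex of $\pi(C)$ within $\pi(\mathcal{A}_{*})$. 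Hence $\pi(\mathcal{A}_{*})$ is an irredundant family of $k$-subcubes of $\{0,1\}^n$ through $\boldsymbol{0}$ or $\boldsymbol{1}$, and the inductive hypothesis gives $|\mathcal{A}_{*}| = |\pi(\mathcal{A}_{*})| \le \binom{n}{k}$.

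Since $\binom{n+1}{k+1} = \binom{n}{k} + \binom{n}{k+1}$, it remains to prove $|\mathcal{A}_{\mathrm{fix}}| \le \binom{n}{k+1}$. The members of $\mathcal{A}_{\mathrm{fix}}$ through $\boldsymbol{0}$ have coordinate $n+1$ fixed to $0$ and so lie in the facet $\{x_{n+1}=0\} \cong \{0,1\}^n$, where they are $(k+1)$-subcubes through $\boldsymbol{0}$; dually the through-$\boldsymbol{1}$ members lie in the facet $\{x_{n+1}=1\}$. Indexing each such subcube by its set of moving coordinates $M \in [n]^{(k+1)}$, each $M$ carries at most two members, one per facet. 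Writing $S', T', R'$ for the sets of $M$ carrying two, one, and zero members, one has $|\mathcal{A}_{\mathrm{fix}}| = \binom{n}{k+1} + |S'| - |R'|$, so the target is $|S'| \le |R'|$, exactly as in the proof of Theorem~\ref{thm:topbottom}. I would establish it by the same device: for each $M \in S'$ extract, from the private vertices of its two subcubes, sets $B_M', C_M' \subseteq R'$ obeying $|B_M' \cap C_M'| = 1$ and $|B_M' \cap C_{M'}'| \in \{0,2\}$ for $M \ne M'$, and then invoke the $p=2$ case of the linear-algebra lemma used in the proof of Theorem~\ref{thm:topbottom}.

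The main obstacle is precisely this last step, and it is genuinely harder than in Theorem~\ref{thm:topbottom}. When $n = 2k$ the two facets' subcubes occupied complementary ranges of levels, so a private vertex could be pushed to a fixed middle level without being captured by a subcube of the opposite type; when $n < 2k$ the two types overlap in level, and --- more seriously --- the coupling between the two facets is no longer internal to $\mathcal{A}_{\mathrm{fix}}$ but is mediated by the moving family $\mathcal{A}_{*}$, whose private vertices protrude into both facets (one sees this already in the extremal family $\mathcal{F}_{\boldsymbol{0}}$, where the through-$\boldsymbol{0}$ moving subcubes place their private vertices in the facet $\{x_{n+1}=1\}$ and thereby block any through-$\boldsymbol{1}$ subcube there). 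Consequently the maximality of $\mathcal{A}$ must be used to locate $B_M'$ and $C_M'$ inside $R'$, and checking the parity condition against this coupling is where the real work lies. The analogous projection step, carried out for unrestricted irredundant families, is what reduces the full Aharoni--Holzman conjecture (Conjecture~\ref{conjecture:aharoni}) to the case $n = 2k-1$.
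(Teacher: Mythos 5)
Your induction frame --- fixed codimension $c=n-k$, base case $n=2k$ given by Theorem~\ref{thm:topbottom}, and a projection deleting a moving coordinate to pass from $(n+1,k+1)$ down to $(n,k)$ --- is exactly the paper's, and your treatment of $\mathcal{A}_{*}$ (injectivity of the projection, private vertices surviving) is correct. But the way you try to complete the step is not the paper's, and it cannot work. The paper never bounds the subcubes in which coordinate $n+1$ is fixed. Instead it defines, for \emph{every} $i \in [n+1]$, the subfamily $\mathcal{A}_{i}$ of subcubes with coordinate $i$ moving, notes that $\sum_{i=1}^{n+1}|\mathcal{A}_{i}| = (k+1)|\mathcal{A}|$ since each $(k+1)$-subcube has exactly $k+1$ moving coordinates, applies your projection argument to each $\mathcal{A}_{i}$ to get $|\mathcal{A}_{i}| \leq {n \choose k}$, and concludes $|\mathcal{A}| \leq \tfrac{n+1}{k+1}{n \choose k} = {n+1 \choose k+1}$. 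No second bound is needed; the averaging over all $n+1$ coordinates is the missing idea.

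Your substitute target $|\mathcal{A}_{\mathrm{fix}}| \leq {n \choose k+1}$ is not merely ``genuinely harder''; it is false. Let $\mathcal{A}_{0}$ consist of all $(k+1)$-subcubes of the facet $\{x_{n+1}=0\}$ through $\boldsymbol{0}$ together with all $(k+1)$-subcubes of the facet $\{x_{n+1}=1\}$ through $\boldsymbol{1}$. The two facets are disjoint and each half is a copy of $\{\mathbb{P}v : v \in [n]^{(k+1)}\}$, which is irredundant with $v$ as the private vertex of $\mathbb{P}v$; so $\mathcal{A}_{0}$ is irredundant, consists entirely of subcubes with coordinate $n+1$ fixed, and has size $2{n \choose k+1}$. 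Extending $\mathcal{A}_{0}$ to a maximal irredundant family only adds subcubes, so the maximal $\mathcal{A}$ you work with can have $|\mathcal{A}_{\mathrm{fix}}| \geq 2{n \choose k+1} > {n \choose k+1}$. (This does not contradict the corollary, since $2{n \choose k+1} \leq {n \choose k}+{n \choose k+1} = {n+1 \choose k+1}$ precisely when $n \leq 2k+1$, which holds here.) Consequently the parity/linear-algebra machinery you propose to redeploy is being aimed at an inequality that is simply not true, and that portion of the argument --- which you in any case leave as ``where the real work lies'' rather than carrying out --- is a genuine gap. Replacing the moving/fixed dichotomy for a single coordinate by the count over all $n+1$ coordinates repairs the proof and is what the paper does.
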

\begin{proof}
Suppose the result is true for some \(n\) and \(k\) such that \(n \geq 2k\); we will prove it for \(n+1,k+1\). Let \(\mathcal{A}\) be an irredundant family of \((k+1)\)-subcubes of \(\{0,1\}^{n+1}\) which contain \(\boldsymbol{0}\) or \(\boldsymbol{1}\). Let \(\mathcal{A}_{i} = \{C \in \mathcal{A}: C_{i} = *\}\) be the collection of subcubes in \(\mathcal{A}\) with coordinate \(i\) moving; since each subcube has \(k+1\) moving coordinates,
\[\sum_{i=0}^{n+1}|\mathcal{A}_{i}| = (k+1)|\mathcal{A}|.\]
We will show that \(|\mathcal{A}_{i}| \leq {n \choose k}\) for each \(i \in [n+1]\), giving \(|\mathcal{A}| \leq \frac{n+1}{k+1} {n \choose k} = {n+1 \choose k+1}\). Without loss of generality, \(i=n+1\). We project the family \(\mathcal{A}_{n+1}\) of \((k+1)\)-subcubes onto \(\{0,1\}^{n}\): let \(\mathcal{A}_{n+1}' = \{C' : C \in \mathcal{A}_{n+1}\}\), where \(C'\) is the \(k\)-subcube of \(\{0,1\}^{n}\) produced by projecting \(C\) onto \(\{0,1\}^{n}\), i.e. deleting the \((n+1)\)-coordinate of \(C\) (which is a \(*\)). Clearly, \(\mathcal{A}_{n+1}'\) is a collection of \(|\mathcal{A}_{n+1}|\) \(k\)-subcubes of \(\{0,1\}^{n}\) through \(\boldsymbol{0}\) or \(\boldsymbol{1}\). It is also irredundant, as the projection of a private vertex of \(C\) in \(\mathcal{A}_{n+1}\) is clearly a private vertex for \(C'\) in \(\mathcal{A}_{n+1}'\). Hence, by the induction hypothesis, \(|\mathcal{A}_{n+1}'| \leq {n \choose k}\), giving the result.
\end{proof}

Notice that we do not have uniqueness of the extremal families in Theorem \ref{thm:topbottom} for any value of \(k\): as well as taking \(\mathcal{A} = \mathcal{F}_{\boldsymbol{0}}\) or \(\mathcal{F}_{\boldsymbol{1}}\), any family \(\mathcal{A}\) containing exactly one of \(\mathbb{P}x,\mathbb{U}x\) for each \(x \in [2k]^{(k)}\) is extremal. Slightly more surprisingly, we do not have uniqueness (in Corollary \ref{corr:topbottom}) for \(n=5,k=3\) either: consider the irredundant family of ten \(3\)-subcubes of \(\{0,1\}^{5}\), five through \(\boldsymbol{0}\) and five through \(\boldsymbol{1}\), exhibited earlier.\\

\section{Lower bounds}

\subsubsection*{The case \(n=2k\).}
Now, returning to general irredundant families, what can we say about the case \(n = 2k\)? Meshulam's bound gives:
\begin{eqnarray*}
M(2k,k) & \leq & \frac{2}{1+2^{-2k}{2k \choose k}}{2k \choose k}\\
& = & \frac{2}{1+(1+o(1))/\sqrt{2\pi k}}{2k \choose k}\\
& = & 2(1-\Theta(1/\sqrt{k})){2k \choose k}
\end{eqnarray*}

Our lower bound (\ref{eq:randomlowerbound}) no longer beats \(\mathcal{F}_{\boldsymbol{0}}\), since it only gives
\[M(2k,k) \geq \beta(1-\beta)^{(1-\beta)/\beta} 2^{2k} = (1+o(1))\frac{\beta}{e(1-\beta)}2^{2k} = (1+o(1))\frac{2}{e}{2k \choose k}.\]
Notice that \(\mathcal{F}_{\boldsymbol{0}}\) is a maximal irredundant family. We know from Theorem \ref{thm:topbottom} that any irredundant family of \(k\)-subcubes in which each goes through either \(\boldsymbol{0}\) or \(\boldsymbol{1}\) has size at most \({2k \choose k}\); we now exhibit a maximal such family \(\mathcal{B}\) which is not maximal irredundant.

Let \(\mathcal{B}_{0} = \{\mathbb{P}x: 1 \in x\}\) be the collection of \(k\)-subcubes containing the line \((*,0,0,...,0)\), and \(\mathcal{B}_{1} = \{\mathbb{U}x: n \notin x\}\) the collection containing \((1,1,...,1,*)\). Consider the family \(\mathcal{B} = \mathcal{B}_{0} \cup \mathcal{B}_{1}\); it has size \(|\mathcal{B}| = 2{2k-1 \choose k-1} = {2k \choose k}\); we will show that it is irredundant and not maximal. What are the \(\mathcal{B}\)-private vertices of each subcube \(C \in \mathcal{B}\)? Write \(C_{i}\) for the symbol (\(0,1\) or \(*\)) in the \(i\)-coordinate of the subcube \(C\). There are 4 different types of subcubes in \(\mathcal{B}\) to consider:
\begin{itemize}
\item \(C \in \mathcal{B}_{0}\) with \(C_{n} = 0\), e.g. \(C=\)\\*
\((*,*,\ldots,*,*,0,\ldots,0)\) has \(\mathcal{B}_{0}\)-private vertices\\*
\((*,1,\ldots,1,1,0,\ldots,0)\);\\*
\((1,1,\ldots,1,1,0,\ldots,0) \in (1,1,\ldots,1,1,*,\ldots,*) \in \mathcal{B}_{1}\), but\\*
\((0,1,\ldots,1,1,0,\ldots,0) \in [n]^{(k-1)}\) so is not in any \(D \in \mathcal{B}_{1}\), so is the unique \(\mathcal{B}\)-private vertex of \(C\).
\item \(C \in \mathcal{B}_{0}\) with \(C_{n} = *\): e.g. \(C =\)\\*
\((*,*,\ldots,*,0,\ldots,0,*)\) has \(\mathcal{B}_{0}\)-private vertices\\*
\((*,1,\ldots,1,0,\ldots,0,1)\);\\*
this line has \(k\) fixed 0's in coordinates \(\{2,\ldots,n-1\}\) whereas each \(D \in \mathcal{B}_{1}\) has at most \(k-1\) \(*\)'s in this range, hence this line is disjoint from \(\mathcal{B}_{1}\) and both its vertices are the unique \(\mathcal{B}\)-private vertices of \(C\).
\item \(C \in \mathcal{B}_{1}\) with \(C_{1} = 1\): e.g. \(C=\)\\*
\((1,*,\ldots,*,1,\ldots,1,*)\) has \(\mathcal{B}\)-private vertex\\*
\((1,0,\ldots,0,1,\ldots,1,1)\)
\item \(C \in \mathcal{B}_{1}\) with \(C_{1} = *\): e.g. \(C=\)\\*
\((*,*,\ldots,*,1,\ldots,1,*)\) has \(\mathcal{B}\)-private vertices\\*
\((0,0,\ldots,0,1,\ldots,1,*)\)
\end{itemize}
Notice that
\[\cup_{D \in \mathcal{B}_{0}}D = [n]^{(\leq k-1)}\cup\{x \in [n]^{(k)}: 1 \in x\}\]
and
\[\cup_{D \in \mathcal{B}_{1}}D = [n]^{(\geq k+1)}\cup\{x \in [n]^{(k)}: n \notin x\}\]
Hence,
\[\{0,1\}^{n} \setminus \cup_{D \in \mathcal{B}}D = \{x \in [n]^{(k)}: 1 \notin x,n \in x\}\]
Now let \(E\) be any \(k\)-subcube with \(E_{1} = 0,E_{n} = 1\).\\
\textit{Claim}: \(\mathcal{B} \cup \{E\}\) is also irredundant.\\
\textit{Proof of Claim}: If \(E\) has \(s\) 0's and \(t\) 1's in coordinates \(\{2,\ldots,n-1\}\), where \(s+t=k-2\), then setting \(k-1-t\) \(*\)'s = 1 and the other \(t+1\) \(*\)'s = 0, we find an \(x \in E\cap[n]^{(k)}: 1 \notin x,n \in x\), i.e. a \(\mathcal{B}\)-private vertex for \(E\). We must now check that each of the above types of subcube in \(\mathcal{B}\) has a \(\mathcal{B}\)-private vertex not in \(E\):
\begin{itemize}
\item \(C \in \mathcal{B}_{0}\) with \(C_{n} = 0\): disjoint from \(E\), so the \(\mathcal{B}\)-private vertex will do.
\item \(C \in \mathcal{B}_{0}\) with \(C_{n} = *\): choose the \(\mathcal{B}\)-private vertex with 1-coordinate 1.
\item \(C \in \mathcal{B}_{1}\) with \(C_{1} = 1\): disjoint from \(E\), so the \(\mathcal{B}\)-private vertex will do.
\item \(C \in \mathcal{B}_{1}\) with \(C_{1} = *\): choose the \(\mathcal{B}\)-private vertex with \(n\)-coordinate 0.
\end{itemize}
This proves the claim. 
How many such subcubes can we add on? We can certainly add on the family:
\[\mathcal{E} = \{E: E_{1} = 0,E_{n} = 1, E_{2} = *,E_{i} = 0\; \textrm{or}\; * \forall i \neq 1,2\; \textrm{or}\; n\}\]
e.g. the subcube\\
\((0,*,0,\ldots,0,*,\ldots,*,1)\) has private vertex\\
\((0,1,0,\ldots,0,1,\ldots,1,1)\).\\
Hence,
\[M(2k,k) \geq {2k \choose k}+ {2k-3 \choose k-1} = (1+\tfrac{1}{8}+o(1)){2k \choose k}\]
but we still have a gap of \(\tfrac{7}{8}\) between the constants in our lower and upper bounds.

Notice the sharp drop by a factor of order \(\sqrt{n}\) from \(M(n,\lfloor \gamma n \rfloor) = \Theta_{\gamma}(2^{n})\) for \(\gamma \in (0,\tfrac{1}{2})\) to
\[M(n,\lfloor n/2 \rfloor) \leq 2 {n \choose \lfloor n/2 \rfloor} = 2(1+o(1))\frac{2^{n}}{\sqrt{\pi n}}\]

\subsubsection*{The case \(k < \tfrac{1}{2}n\)}
When \(k < \tfrac{1}{2}n\), we can construct an irredundant family by taking a union of \(\mathcal{F}_{v}\)'s: choose a maximum \((2k+1)\)-separated subset \(S \subset \{0,1\}^{n}\) (i.e. a maximum \(k\)-error correcting code) and let
\[\mathcal{F}_{S} = \cup_{v \in S}\mathcal{F}_{v}\]
be the family of all \(k\)-subcubes containing a point of \(S\); then
\[|\mathcal{F}_{S}| = |S|{n \choose k}.\]
When there is a subset \(S \subset \{0,1\}^{n}\) such that the Hamming balls of radius \(k\) centred on the vertices of \(S\) {\em partition} \(\{0,1\}^{n}\) (i.e. a perfect \(k\)-error correcting code), 
\[|\mathcal{F}_{S}| = \frac{2^n}{\sum_{i=0}^{k}{n \choose i}}{n \choose k}\]
which exactly matches Meshulam's bound.

It is known that there is a perfect \(k\)-error correcting code in \(\{0,1\}^{n}\) precisely in the following cases (see \cite{perfectcodes}):
\begin{itemize}
\item \(k=1\), \(n+1\) is a power of 2 (take any Hamming code)
\item \(k = 3\), \(n=23\) (take the Golay code)
\item \(n = 2k+1\) (take a `trivial' code, two vertices of distance \(n\) apart)
\end{itemize}
so in these cases, we have equality in Meshulam's bound:
\[M(n,k) = \frac{2^n}{\sum_{l=0}^{k}{n \choose l}}{n \choose k}.\]

First, consider the case \(k=1\); a 1-subcube is simply an edge of \(\{0,1\}^{n}\). Meshulam's bound is
\[M(n,1) \leq \tfrac{n}{n+1}2^{n}.\]
Kabatyanskii and Panchenko \cite{1packings} proved the existence of asymptotically perfect packings of 1-balls into \(\{0,1\}^{n}\), namely that there is a packing of \[\frac{2^{n}}{n+1}(1-O(\ln \ln n / \ln n))\]
1-balls into \(\{0,1\}^{n}\). Taking all edges through the centre of each ball gives an irredundant family of size
\[\frac{n}{n+1}2^{n}(1-O(\ln \ln n / \ln n)) = 2^{n}(1-O(\ln \ln n / \ln n))\]
We can in fact improve on this with the following `product' construction. Let \(s \in \mathbb{N}\) be maximal such that \(2^{s}-1 \leq n\); write \(n = m+r\) where \(m = 2^{s}-1\). Take a perfect packing of 1-balls into \(\{0,1\}^{m}\) and take all edges through the centre of each ball, producing an irredundant family \(\mathcal{B}\) in \(\{0,1\}^{m}\) of size \(\tfrac{m}{m+1}2^{m}\). Writing \(\{0,1\}^{n} = \{0,1\}^m \times \{0,1\}^{r}\), let \(\mathcal{A}\) be the family consisting of a copy of \(\mathcal{B}\) in each of the \(2^{r}\) disjoint copies of \(\{0,1\}^{r}\); \(|\mathcal{A}| = \tfrac{m}{m+1}2^{n}\). Notice that \(m = 2^{s}-1 \geq \tfrac{1}{2}n\), since otherwise \(2^{s+1}-1 \leq n\), contradicting the maximality of \(s\). Hence, \(|\mathcal{A}| \geq \tfrac{n}{n+2}2^{n}\), and we have
\[M(n,1) \geq \frac{n}{n+2}2^{n}\quad \forall n \in \mathbb{N},\]
so
\[M(n,1) = 2^{n}(1-\Theta(1/n)).\]

What about for \(k\) fixed and \(n\) growing? It is a longstanding open problem in coding theory to determine whether, for \(k\) fixed, there is an asymptotically perfect packing of \(k\)-balls into \(\{0,1\}^{n}\), i.e. a packing of
\[\frac{2^{n}}{\sum_{i=0}^{k} {n \choose i}}(1-o(1))\]
\(k\)-balls into \(\{0,1\}^{n}\); given such, by taking all \(k\)-subcubes through the centre of each ball, we would immediately obtain an irredundant family of size
\[\frac{{n \choose k}}{\sum_{l=0}^{k} {n \choose l}}2^{n}(1-o(1)) = 2^{n}(1-o(1))\]
However, this conjecture remains unsolved for all \(k > 1\).

Moreover, for \(k = \Omega(n)\), the approach outlined above can only give a relatively small irredundant family. Corr\'adi and Katai \cite{corradi} proved the following:

\begin{theorem}[Corr\'adi-Katai, 1969]
Let \(S \subset \{0,1\}^{n}\) be an \((n/2)\)-separated set; then
\begin{itemize}
\item \(|S| \leq n+1\) if \(n\) is odd
\item \(|S| \leq n+2\) if \(n \equiv 2 \mod 4\)
\item \(|S| \leq 2n\) if \(n \equiv 0 \mod 4\)
\end{itemize}
\end{theorem}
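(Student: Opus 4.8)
The plan is to translate the separation condition into a statement about inner products of $\pm 1$ vectors, and then bound the number of such vectors by a rank-plus-connectivity argument. First I would replace each $x \in S$ by the sign vector $\epsilon(x) \in \{-1,1\}^n$ with $\epsilon(x)_i = 1 - 2x_i$; then $\langle \epsilon(x), \epsilon(y)\rangle = n - 2|x\Delta y|$, so the condition $|x \Delta y| \ge n/2$ becomes exactly $\langle \epsilon(x),\epsilon(y)\rangle \le 0$. Thus it suffices to bound the size $N$ of a set $V = \{v_1,\dots,v_N\} \subseteq \{-1,1\}^n$ of distinct (hence nonzero) vectors with pairwise non-positive inner products, and the crucial extra piece of data is the parity relation $\langle v_i, v_j\rangle \equiv n \pmod 2$, which controls when such an inner product can equal $0$.

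The core of the argument is a decomposition into mutually orthogonal clusters. Let $\Gamma$ be the graph on $[N]$ whose edges are the pairs with $\langle v_i, v_j\rangle < 0$ (strictly), with connected components $\Gamma_1,\dots,\Gamma_t$ (isolated vertices, i.e. vectors orthogonal to all others, counting as singleton components). I would first prove: if the vectors indexed by one component span a subspace of dimension $d$, then that component has at most $d+1$ vectors. The proof is the positive-combination trick: given a dependence $\sum a_i v_i = 0$, split into positive and negative coefficients; writing $u=\sum_{a_i>0}a_i v_i = \sum_{a_j<0}(-a_j)v_j$ gives $\|u\|^2 \le 0$, so the positive and negative parts vanish separately, and then $0=\langle v_j,\sum_{a_i>0}a_i v_i\rangle$ for any outside $j$ forces $\langle v_j,v_i\rangle=0$ there. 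Connectivity then forces every nontrivial dependence to have all coefficients of one strict sign, so the dependency space is at most one-dimensional and the component has at most $d+1$ vectors. Since two vectors in different components have inner product $\le 0$ but not $<0$, they are orthogonal; hence the component spans are mutually orthogonal, their dimensions sum to at most $n$, and summing the per-component bound yields $N \le n + t$.

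It remains to bound the number of components $t$, and this is where the dependence on $n \bmod 4$ enters. Choosing one vector from each component gives $t$ pairwise orthogonal vectors in $\{-1,1\}^n$, so $t$ is at most the maximum number of mutually orthogonal $\pm1$ vectors. If $n$ is odd then $\langle v_i, v_j\rangle = n - 2|x\Delta y|$ is odd and never $0$, so there are no orthogonal pairs, $t=1$, and $N \le n+1$. If $n \equiv 0 \pmod 4$ I use only the trivial bound $t \le n$ (each component occupies at least one dimension), giving $N \le 2n$. For $n \equiv 2 \pmod 4$ I would prove the key structural fact that three mutually orthogonal vectors in $\{-1,1\}^n$ cannot exist: after flipping signs in coordinates so that one of them is $\boldsymbol{1}$, counting the four sign-patterns of the other two and imposing the three orthogonality relations forces each pattern-class to have size $n/4$, whence $4 \mid n$. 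Thus $t \le 2$ and $N \le n+2$ in this case.

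The main obstacle is exactly this last case: the general cluster bound only gives $N \le 2n$, and squeezing it down to $n+2$ rests entirely on the triangle-free structure of the orthogonality relation, that is, on the arithmetic lemma ruling out three mutually orthogonal sign vectors when $n \equiv 2 \pmod 4$. I would take care to verify that singleton components and the orthogonality-across-components step cause no exceptions, and that the reduction from $|x\Delta y|\ge n/2$ to $\langle\cdot,\cdot\rangle\le 0$ is faithful in the odd case (where the inner product is in fact $\le -1$).
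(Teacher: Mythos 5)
Your proof is correct, and it is worth noting that the paper itself offers no proof of this theorem: it simply refers the reader to \cite{bollobas} \S 10. Your argument is essentially the classical one found there --- pass to sign vectors so that $(n/2)$-separation becomes pairwise non-positive inner products in $\{-1,1\}^{n}$, decompose the vectors into clusters joined by strictly negative inner products, show that a connected cluster spanning a $d$-dimensional subspace has at most $d+1$ members via the positive-combination trick, observe that distinct clusters span mutually orthogonal subspaces so that $N \leq n+t$, and finally control the number $t$ of clusters by the parity of $n-2|x\Delta y|$ and the mod-$4$ obstruction to three mutually orthogonal $\pm1$ vectors. All the case analysis checks out: for odd $n$ the inner products are odd and hence strictly negative, giving $t=1$; for $n \equiv 2 \pmod 4$ your four-equation count $4a=n$ correctly rules out three mutually orthogonal sign vectors, giving $t \leq 2$; and for $n \equiv 0 \pmod 4$ the trivial bound $t \leq n$ gives $2n$. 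The one step you should write out more fully in a final version is the inference from ``every nontrivial dependence in a cluster has all nonzero coefficients of one strict sign'' to ``the dependency space is one-dimensional'': you also need that any such dependence has \emph{full} support on the cluster (which the same connectivity argument delivers, since the support would otherwise be cut off from the rest of the cluster by orthogonality), after which two full-support, one-signed dependencies must be proportional because subtracting a suitable multiple of one from the other kills a coordinate and hence kills the whole dependence.
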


(For a proof of this, we refer the reader for example to \cite{bollobas} \S 10.)

So we see that, for example, any \((2k+1)\)-separated family \(S\) of vertices in \(Q_{4k}\) must have \(|S| \leq 8k\), and so taking all \(k\)-subcubes through each of these vertices only gives 
\[|\mathcal{F}_{S}| \leq 8k{4k \choose k} \leq 8k\exp\left(-\frac{4k}{32}\right)2^{4k}.\]

We now improve on this using a probabilistic method. The idea is to take a random subset \(S \subset \{0,1\}^{n}\) where each vertex is present independently with some fixed probability \(p\); for each vertex \(w \in \{0,1\}^{n}\) of (Hamming) distance \(k\) from \(S\), we choose a \(k\)-subcube \(C_{w}\) between \(w\) and some vertex of \(S\), giving a random irredundant family of \(k\)-subcubes \(\mathcal{A} = \{C_{w}:d(w,S)=k\}\); the expected size of this family is then a lower bound for \(M(n,k)\).

\begin{theorem}
\label{thm:randomlowerbound}
For any \(k \leq n\), there exists an irredundant family of \(k\)-subcubes of \(\{0,1\}^{n}\) of size at least
\[\beta(1-\beta)^{(1-\beta)/\beta}2^{n},\]
where 
\[\beta = \beta_{n,k} := \frac{{n \choose k}}{\sum_{i=0}^{k}{n \choose i}}.\]
\end{theorem}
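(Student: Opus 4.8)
The plan is to use the probabilistic method sketched in the paragraph immediately preceding the theorem statement. I would fix a probability $p \in [0,1]$ (to be optimized at the end) and choose a random subset $S \subseteq \{0,1\}^n$ by including each vertex independently with probability $p$. Call a vertex $w \in \{0,1\}^n$ \emph{good} if its Hamming distance to $S$ is exactly $k$, i.e. $S$ contains no vertex within distance $k-1$ of $w$ but does contain at least one vertex at distance exactly $k$. For each good $w$, pick some $s \in S$ with $|w \Delta s| = k$ and let $C_w$ be a $k$-subcube whose fixed coordinates are the $k$ coordinates where $w$ and $s$ agree differ—more precisely, the $k$-subcube spanned between $w$ and $s$ (the moving coordinates are exactly the $k$ coordinates in $w \Delta s$). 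Then set $\mathcal{A} = \{C_w : w \text{ good}\}$.

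The first key step is to verify that $\mathcal{A}$ is genuinely irredundant, with $w$ serving as a private vertex of $C_w$. The point is that $w$ lies in $C_w$ by construction, and I claim $w$ lies in no other $C_{w'}$: any subcube $C_{w'}$ in the family is contained in the Hamming ball of radius $k$ about its anchor point $s' \in S$, and every vertex of $C_{w'}$ is within distance $k$ of $s'$; if $w \in C_{w'}$ then $w$ is within distance $k$ of $s' \in S$, and one checks that the goodness condition on $w$ (no point of $S$ within distance $k-1$) forces $w' = w$. This establishes that distinct good vertices give distinct subcubes each retaining a private vertex, so $|\mathcal{A}|$ equals the number of good vertices.

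The second step is the expectation computation, which is where the precise form of the bound comes from. For a fixed $w$, let $N_j = \binom{n}{j}$ be the number of vertices at distance $j$, so the number of points within distance $k-1$ of $w$ is $\sum_{i=0}^{k-1}\binom{n}{i}$ and the number at distance exactly $k$ is $\binom{n}{k}$. The probability that $w$ is good is
\[
\mathbb{P}(w \text{ good}) = (1-p)^{\sum_{i=0}^{k-1}\binom{n}{i}}\left(1 - (1-p)^{\binom{n}{k}}\right),
\]
since all $\sum_{i=0}^{k-1}\binom{n}{i}$ vertices strictly inside radius $k$ must be absent while at least one of the $\binom{n}{k}$ vertices at distance exactly $k$ must be present. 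By linearity, $\mathbb{E}|\mathcal{A}| = 2^n \cdot \mathbb{P}(w \text{ good})$, and there exists a fixed outcome achieving at least this value.

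The final step is to optimize over $p$ and simplify using the inequality $1 - (1-p)^{\binom{n}{k}} \geq p\binom{n}{k}(1-p)^{\binom{n}{k}-1}$ together with the definition $\beta = \binom{n}{k}/\sum_{i=0}^{k}\binom{n}{i}$. Writing $\Sigma = \sum_{i=0}^{k}\binom{n}{i}$ so that $\sum_{i=0}^{k-1}\binom{n}{i} = \Sigma - \binom{n}{k} = \Sigma(1-\beta)$ and $\binom{n}{k} = \Sigma\beta$, a natural choice is $p$ with $(1-p) = (1-\beta)^{1/(\Sigma\beta)}$ or equivalently tuning $p$ so that $(1-p)^{\binom{n}{k}} = 1-\beta$; then $(1-p)^{\sum_{i=0}^{k-1}\binom{n}{i}} = (1-\beta)^{(1-\beta)/\beta}$ and $1-(1-p)^{\binom{n}{k}} = \beta$, yielding $\mathbb{E}|\mathcal{A}| \geq \beta(1-\beta)^{(1-\beta)/\beta}2^n$ exactly. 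I expect the main obstacle to be the irredundancy verification in the first step: one must be careful that the anchor choice $s$ and the goodness condition interact correctly so that $w$ is private, and that the subcube $C_w$ is well-defined (the case $\binom{n}{k}=0$, i.e. $k>n$, being vacuous). The expectation and optimization steps are essentially routine algebra once the event is correctly identified.
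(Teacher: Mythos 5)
Your proposal is correct and follows essentially the same route as the paper: the same random set $S$, the same definition of the family via vertices at distance exactly $k$ from $S$, the same privacy argument for $w$ in $C_w$, and the same expectation computation, with your explicit choice $(1-p)^{\binom{n}{k}}=1-\beta$ corresponding exactly to the paper's optimal value $t=(1-\beta)^{1/\beta}$ of $t=(1-p)^{\sum_{i=0}^k\binom{n}{i}}$. The only cosmetic difference is that the paper maximizes $t\mapsto t^{1-\beta}-t$ explicitly whereas you substitute the optimizing value directly; the auxiliary inequality you mention in the last step is not actually needed.
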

\begin{proof}
Let \(S\) be a random set of vertices in \(\{0,1\}^{n}\) where each vertex is present independently with probability \(p\) (to be chosen later). Consider the random set of vertices
\[W=\{x \in \{0,1\}^{n}: d(x,S) = k\},\]
where \(d(x,y) = |x \Delta y|\) denotes the Hamming distance between \(x\) and \(y\). For each \(w \in W\), choose any \(x_{w} \in S\) such that \(|w \Delta x_{w}| = k\), and let \(C_{w}\) be the \(k\)-subcube between \(x_{w}\) and \(w\), i.e. 
\[C_{w} = \{y \in \{0,1\}^{n}:\ y \Delta w \subset x_{w} \Delta w\}.\]
Consider the random family of \(k\)-subcubes
\[\mathcal{A} = \{C_{w}:w \in W\}.\]
Note that the subcubes \(C_{w}\) are pairwise distinct: \(x_{w}\) is the unique point of \(S\) in \(C_{w}\), and \(w\) is the `opposite' point, so \(C_{w}\) determines \(w\). Moreover, \(\mathcal{A}\) is irredundant, since \(w\) is a private vertex of \(C_{w}\). (If \(w \in C_{w'}\), then \(|x_{w'} \Delta w| \leq k\), so \(|x_{w'} \Delta w| = k\), so \(w\) is the unique vertex in \(C_{w'}\) of distance \(k\) from \(x_{w'}\), so \(w=w'\).) We now calculate the expectation of the random variable \(|\mathcal{A}|=|W|\). A vertex \(v \in \{0,1\}^{n}\) is in \(W\) if and only if the \((k-1)\)-ball around \(v\) contains no vertices of \(S\) but the \(k\)-ball around \(v\) does contain a vertex of \(S\); the probability of this event is
\[(1-p)^{\sum_{i=0}^{k-1}{n \choose i}}-(1-p)^{\sum_{i=0}^{k}{n \choose i}}.\]
Hence, the expected size of \(\mathcal{A}\) is
\[\mathbb{E}|\mathcal{A}| = 2^{n}\left((1-p)^{\sum_{i=0}^{k-1}{n \choose i}}-(1-p)^{\sum_{i=0}^{k}{n \choose i}}\right).\]
Let
\[\beta =\beta_{n,k} := \frac{{n \choose k}}{\sum_{i=0}^{k}{n \choose i}},\quad t := (1-p)^{\sum_{i=0}^{k}{n \choose i}};\]
then
\[\mathbb{E}|\mathcal{A}| = 2^{n}(t^{1-\beta}-t).\]
The function
\begin{eqnarray*}
f: [0,1] & \to & \mathbb{R};\\
t & \mapsto & t^{1-\beta}-t
\end{eqnarray*}
attains its maximum of
\[\beta(1-\beta)^{(1-\beta)/\beta}\]
at
\[t = (1-\beta)^{1/\beta}.\]
Hence, choosing \(p\) such that
\[(1-p)^{\sum_{i=0}^{k}{n \choose i}} = (1-\beta)^{1/\beta},\]
our random irredudant family has expected size
\[\mathbb{E}|\mathcal{A}| = \beta(1-\beta)^{(1-\beta)/\beta}2^{n}.\]
Hence, there exists an irredundant family of size at least this, proving the theorem.
\end{proof}

Combining this with Meshulam's bound, we see that
\begin{equation}
\label{eq:combination}
\beta(1-\beta)^{(1-\beta)/\beta} 2^{n} \leq M(n,k) \leq \beta 2^{n}.
\end{equation}
The ratio between the lower and upper bound above is
\[g(\beta) := (1-\beta)^{(1-\beta)/\beta}.\]
Observe that \(g'(\beta)>0 \ \forall \beta \in (0,1)\), so \(g\) is strictly increasing on \((0,1)\). Note that
\[\ln(g(\beta)) = \frac{1-\beta}{\beta}\ln(1-\beta) \to -1\quad \textrm{as}\ \beta \to 0,\]
so \(g(\beta) \to 1/e\) as \(\beta \to 0\); \(\ln(g(\beta)) \to 0\) as \(\beta \to 1\), so \(g(\beta) \to 1\) as \(\beta \to 1\). Hence, \(1/e \leq g(\beta) \leq 1\ \forall \beta \in (0,1)\), so the ratio between the upper and lower bounds above never exceeds \(e\). We believe that the upper bound is closer to the true value, but we have been unable to improve our lower bound.

If \(k=o(n)\), then \(\beta = 1-o(1)\). Let
\[\eta = 1-\beta = \frac{\sum_{i=0}^{k-1}{n \choose i}}{\sum_{i=0}^{k}{n \choose i}};\]
then \(\eta = o(1)\).

Theorem \ref{thm:randomlowerbound} implies that
\[M(n,k) \geq (1-\eta)\eta^{\eta/(1-\eta)}2^{n} = (1-O(\eta \ln(1/\eta))) 2^{n};\]
which asymptotically matches the upper bound from Meshulam's theorem,
\[M(n,k) \leq \beta 2^{n} = (1-\eta)2^{n}.\]

If \(k=\lfloor \gamma n\rfloor\) for some \(\gamma \in (0,\tfrac{1}{2})\), using the fact that as \(l\) decreases from \(k-1\) to 0, \({n \choose l}\) decreases geometrically, we obtain
\[\beta_{n,\lfloor \gamma n \rfloor} = (1+o(1))\frac{1-2\gamma}{1-\gamma};\]
substituting this into (\ref{eq:combination}) gives: \[(1+o(1))\left(\frac{\gamma}{1-\gamma}\right)^{\frac{\gamma}{1-2\gamma}}\left(\frac{1-2\gamma}{1-\gamma}\right)2^{n} \leq M(n,\lfloor \gamma n \rfloor) \leq (1+o(1))\left(\frac{1-2\gamma}{1-\gamma}\right)2^{n}.\]
Hence, we see that
\[M(n, \lfloor \gamma n \rfloor) = \Theta_{\gamma}(2^{n}).\]

Comparing this with 
\[M(n,\lfloor n/2 \rfloor) = \Theta\left({n \choose \lfloor n / 2 \rfloor}\right) = \Theta(2^{n}/\sqrt{n}),\]
we see that \(M(n,\lfloor \gamma n \rfloor)\) experiences a drop in its order of magnitude at \(\gamma = 1/2\).

\section{Conclusion}
To conclude, we believe Conjecture \ref{conjecture:aharoni} to be true, but that new ideas would be required to prove it for all \(k > n/2\). The problem seems at first glance to be ideal for tackling using the methods of linear algebra, but we have only been able to obtain a sharp result using such methods under the additional constraint of all the subcubes going through \(\boldsymbol{0}\) or \(\boldsymbol{1}\). All the above-mentioned proofs of Meshulam's bound involve considering separately certain subfamilies of an irredundant family, and then averaging; to prove the conjecture when \(k\) is close to \(n/2\), one would need to take into account how an efficient arrangement in one region of \(\{0,1\}^{n}\) is incompatible with efficient arrangements in other parts. The fact that Meshulam's bound is tight for \(n=2k+1\) indicates that the ideas used to prove it will probably not help to approach the conjecture when \(k\) is close to \(n/2\).

If Conjecture \ref{conjecture:aharoni} turns out to be true, it would also be of interest to determine when the only extremal families are the \(\mathcal{F}_{x}\)'s; we conjecture this to be the case for all \(n > 5\). It may also be possible to close the gap between the lower and upper bounds in (\ref{eq:combination}) for \(k < n/2\), though we consider it fortunate that there is only a constant gap between our `random' lower bound and Meshulam's `combinatorial' upper bound.

\end{document}